\font\smallit=cmti10
\font\smalltt=cmtt10
\renewcommand\section{\@startsection {section}{1}{\z@}
{-30pt \@plus -1ex \@minus -.2ex}
{2.3ex \@plus.2ex}
{\normalfont\normalsize\bfseries\boldmath}}
\renewcommand\subsection{\@startsection{subsection}{2}{\z@}
{-3.25ex\@plus -1ex \@minus -.2ex}
{1.5ex \@plus .2ex}
{\normalfont\normalsize\bfseries\boldmath}}
\renewcommand{\@seccntformat}[1]{\csname the#1\endcsname. }
\newtheorem{theorem}{Theorem}
\newtheorem{lemma}{Lemma}
\theoremstyle{definition}
\newtheorem{defn}{Definition}[section]
\newtheorem{rem}{Remark}[section]
\newtheorem{exam}{Example}[section]
\begin{document}

\begin{center}
\uppercase{\bf Multi-Dimensional  Chocolate and Nim with a Pass}
\vskip 20pt
{\bf Ryohei Miyadera }\\
{\smallit Keimei Gakuin Junior and High School, Kobe City, Japan}\\
{\tt runnerskg@gmail.com}
\vskip 10pt
{\bf Hikaru Manabe}\\
{\smallit Keimei Gakuin Junior and High School, Kobe City, Japan}\\
{\tt urakihebanam@gmail.com}

\end{center}
\vskip 20pt
\centerline{\smallit Received: , Revised: , Accepted: , Published: } 
\vskip 30pt


\centerline{\bf Abstract}
\noindent
Chocolate-bar games are variants of the CHOMP game. Let $Z_{\geq0}$ be a set of nonnegative numbers and $x,y,z \in Z_{\geq0}$. A three-dimensional chocolate bar comprises a set of cubic boxes sized $1 \times 1 \times 1$, with a bitter cubic box at the bottom of the column at position $(0,0)$. For $u,w \in Z_{\geq0}$ such that $u \leq x$ and $w \leq z$, the height of the column at position $(u,w)$ is $ \min (F(u,w),y) +1$, where $F$ is a monotonically increasing function. We denote this chocolate bar as $CB(F,x,y,z)$. Each player in turn cuts the bar on a plane that is horizontal or vertical along the grooves, and eats the broken piece. The player who manages to leave the opponent with the single bitter cubic box is the winner. In this study, functions $F$ such that the Sprague--Grundy value of $CB(F,x,y,z)$ is $x\oplus y \oplus z$ are characterized.  In a prior work, we characterized function
f for a two-dimensional chocolate-bar game such that the Sprague–Grundy value
of CB(f, y, z) is $y \oplus z$. In this study, we characterize function F such that the
Sprague–Grundy value of  $ CB(F, x, y, z)$ is $ x \oplus y \oplus z$. We also study a multi-dimensional chocolate game, where the dimension is bigger than three, and apply the theory to the problem of pass move in Nim. 

We modify the standard rules of the game to allow a one-time pass, that is, a pass move that may be used at most once in the game and not from a terminal position. Once a pass has been used by either player, it is no longer available. It is well-known that in classical Nim, the introduction of the pass alters the underlying structure of the game, significantly increasing its complexity.

A multi-dimensional chocolate game can show a perspective on the complexity of the game of Nim with a pass.
Therefore, the authors address a longstanding open question in combinatorial game theory. 

The authors present this paper, since the discovery of theirs seems to be significant. It seems to the authors that the relation between chocolate games and Nim with a pass will be an important topic of research soon.

\pagestyle{myheadings} 
\markright{\smalltt INTEGERS:  (  )\hfill} 
\thispagestyle{empty} 
\baselineskip=12.875pt 
\vskip 30pt


\section{Introduction}\label{introductionsection}
Chocolate-bar games are variants of the CHOMP game.
A two-dimensional chocolate bar is a rectangular array of squares in which some of the squares are removed. A poisoned square printed in black is included in some part of the bar. Figure \ref{choco2511} displays an example of a two-dimensional chocolate bar. Each player takes their turn to break the bar in a straight line along the grooves, and eats the broken piece. The player who manages to leave the opponent with the single bitter block (black block) is the winner. 

A three-dimensional chocolate bar is a three-dimensional array of cubes in which a poisoned cubic box printed in black is included in some part of the bar. Figure \ref{3dcho1} displays an example of a three-dimensional chocolate bar. 

Each player takes their turn to cut the bar on a plane that is horizontal or vertical along the grooves, and eats the broken piece. The player who manages to leave the opponent with the single bitter cube is the winner. Examples of cut chocolate bars are depicted in Figures \ref{3dcut1}, \ref{3dcut2}, and \ref{3dcut3}.

\begin{exam}
Here, we provide examples of chocolate bars.\\
\noindent $(i)$
Example of a two-dimensional chocolate bar.
	\vspace{0.1cm}
	
\begin{figure}[H]
\begin{center}
\includegraphics[height=1.3cm]{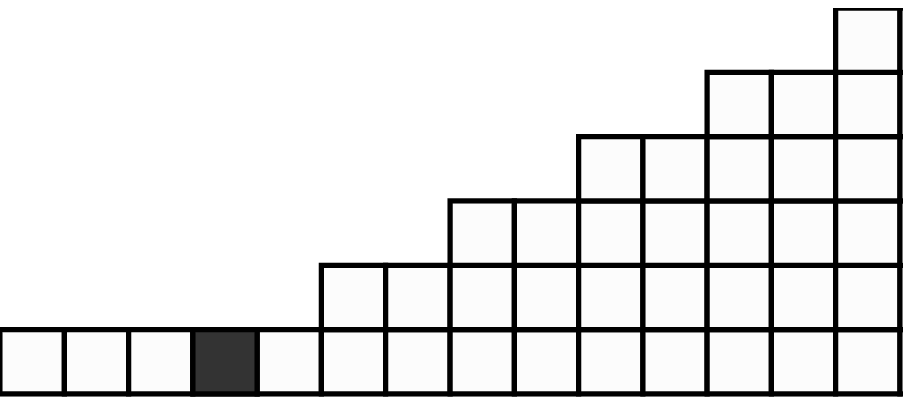}
\captionsetup{labelsep = period}
\caption{ \ }
\label{choco2511}
\end{center}
\end{figure}

\noindent $(ii)$
Example of a three-dimensional chocolate bar.
\begin{figure}[H]
\begin{center}
\includegraphics[height=2.3cm]{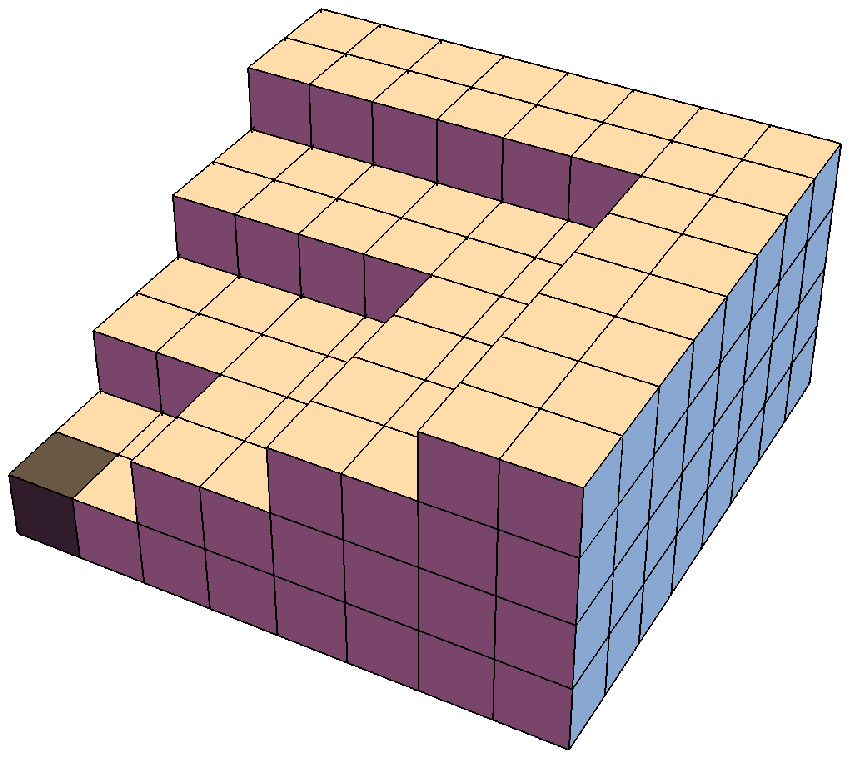}
\captionsetup{labelsep = period}
\caption{ \ }
\label{3dcho1}
\end{center}
\end{figure}
\end{exam}

\begin{exam}
There are three ways to cut a three-dimensional chocolate bar.\\
\noindent
$(i)$ Vertical cut.
\begin{figure}[H]
\begin{center}
\includegraphics[height=4.5cm]{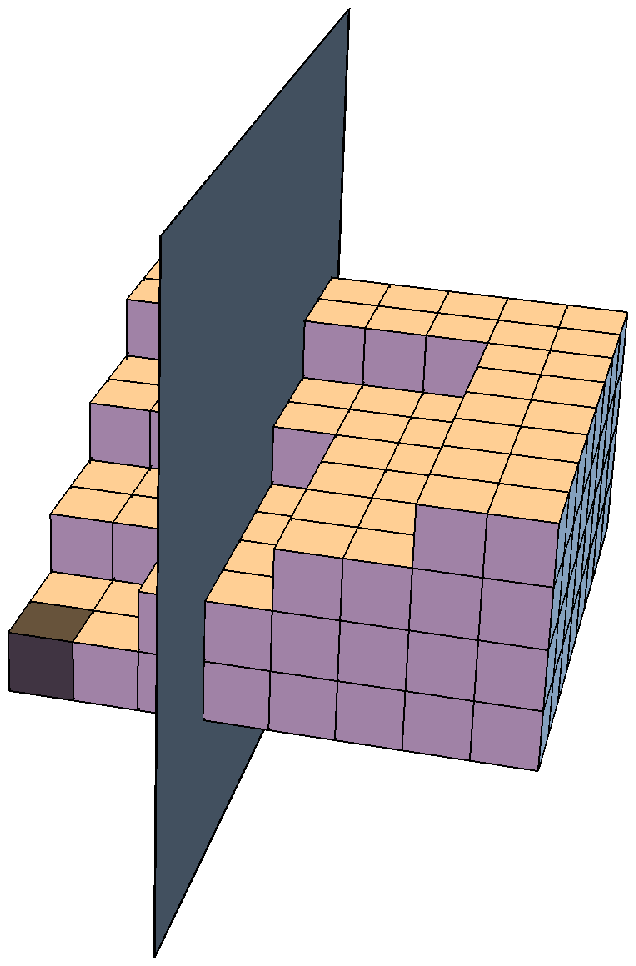}
\captionsetup{labelsep = period}
\caption{ \ }
\label{3dcut1}
\end{center}
\end{figure}

\noindent $(ii)$
Vertical cut.
\begin{figure}[H]
\begin{center}
\includegraphics[height=4.5cm]{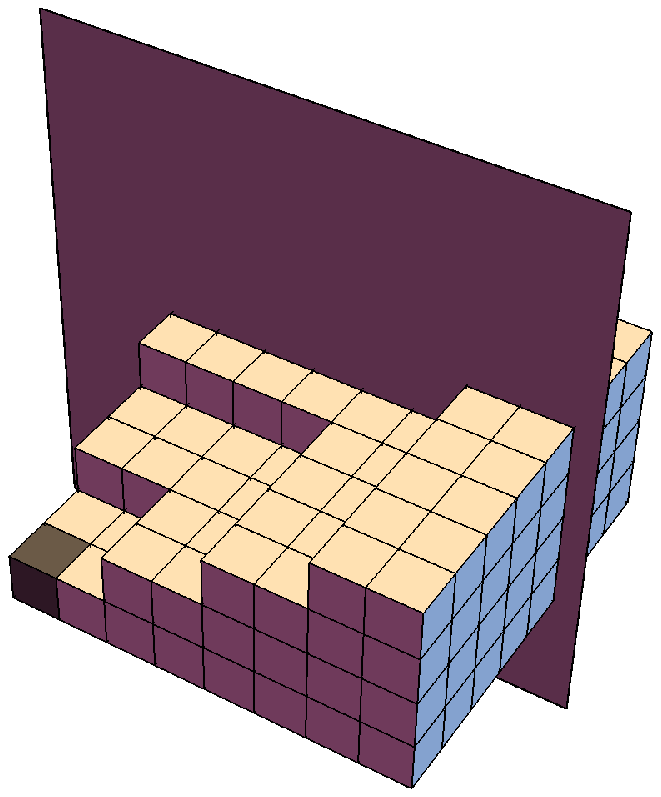}
\captionsetup{labelsep = period}
\caption{ \ }
\label{3dcut2}
\end{center}
\end{figure}

\noindent $(iii)$
Horizontal cut.

\begin{figure}[H]
\begin{center}
\includegraphics[height=2.6cm]{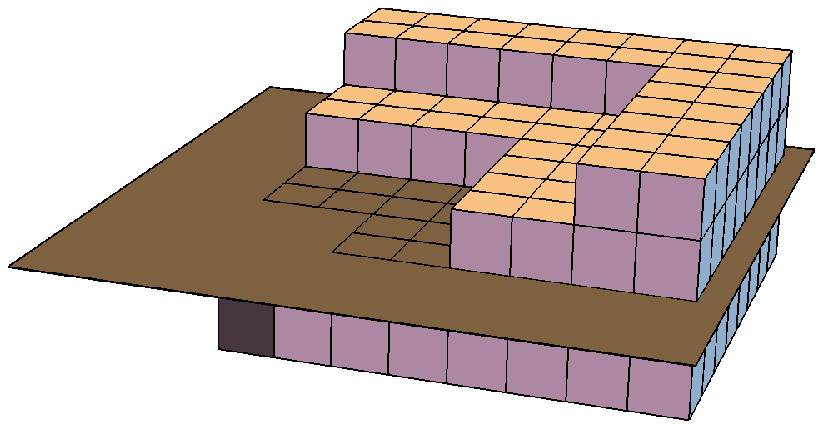}
\captionsetup{labelsep = period}
\caption{ \ }
\label{3dcut3}
\end{center}
\end{figure}

\end{exam}
 The original two-dimensional chocolate bar introduced by Robin \cite{robin} comprises a rectangular bar of chocolate with a bitter corner, as shown in Figure \ref{robinchoco}.
Because the horizontal and vertical grooves are independent, an $m \times n$ rectangular chocolate bar is similarly structured as the game of Nim, which includes heaps of $m-1$ and $n-1$ stones. Therefore, the chocolate-bar game (Figure \ref{robinchoco}) is mathematically the same as Nim, which includes heaps of $5$ and $3$ stones (Figure \ref{nimof5and3}).
Because the Grundy number of the Nim game with heaps of $m-1$ and $n-1$ stones is $(m-1) \oplus (n-1)$, the Grundy number of this $m \times n$ rectangular bar is $(m-1) \oplus (n-1)$.

In addition, Robin \cite{robin} has presented a cubic chocolate bar. For example, see Figure \ref{3dcho1}.
It can be easily determined that the three-dimensional chocolate bar in Figure \ref{3dcho1} is mathematically the same as Nim with heaps of $5$, $3$, and $5$ stones. Hence, the Grundy number of this $6 \times 4 \times 6$ cuboid bar is $5 \oplus 3 \oplus 5$.
\begin{exam}
Here, we provide an example of the traditional Nim game and two examples of chocolate bars.

\begin{figure}[H]
\begin{center}
\includegraphics[height=1.8cm]{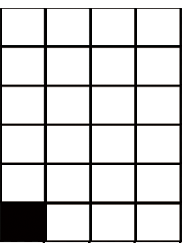}
\captionsetup{labelsep = period}
\caption{ \ }
\label{robinchoco}
\end{center}
\end{figure}

\begin{figure}[H]
\begin{center}
\includegraphics[height=0.9cm]{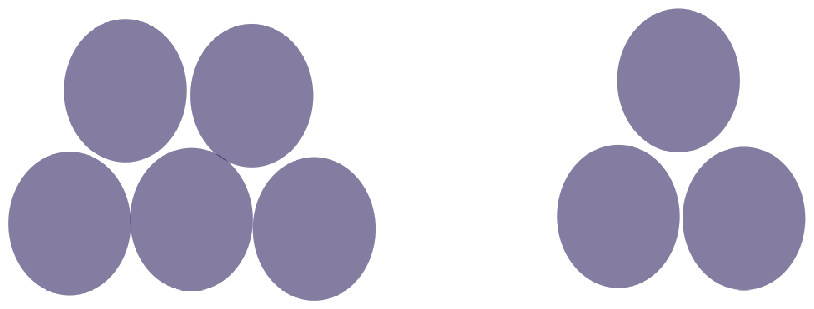}
\captionsetup{labelsep = period}
\caption{ \ }
\label{nimof5and3}
\end{center}
\end{figure}

\begin{figure}[H]
\begin{center}
\includegraphics[height=3cm]{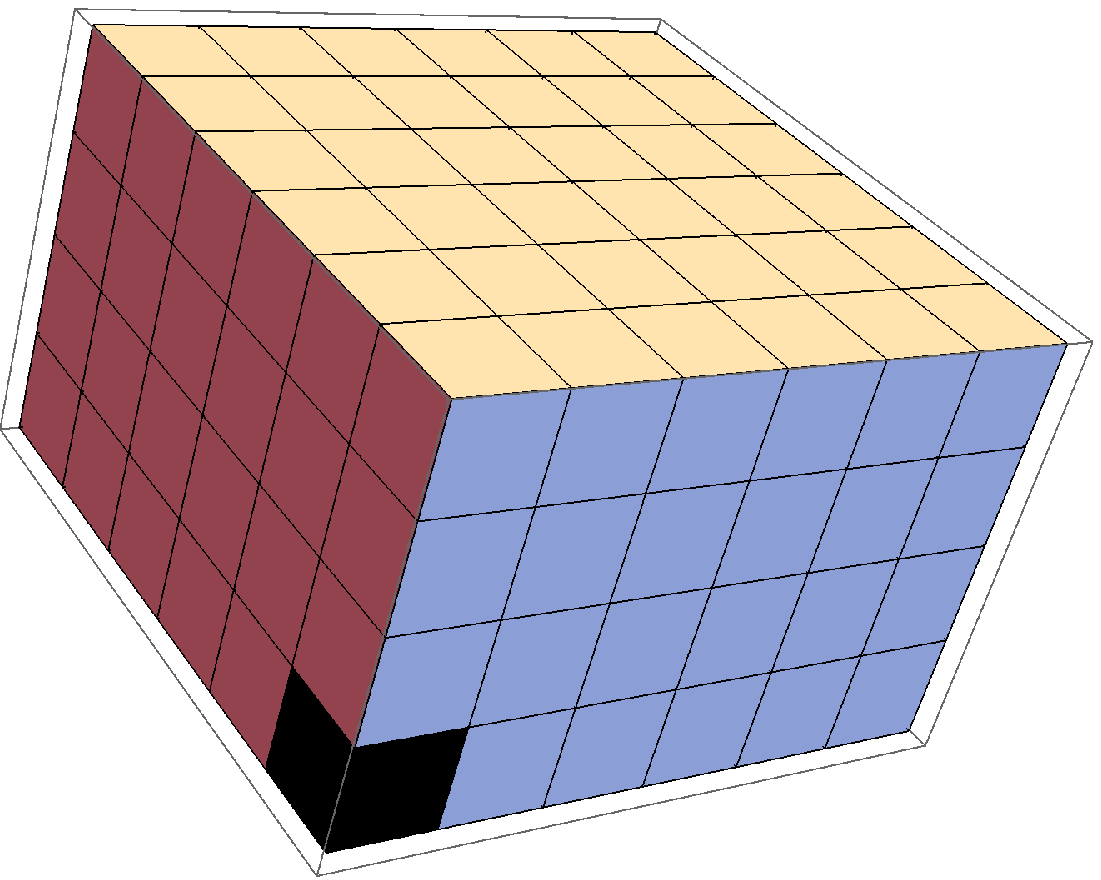}
\captionsetup{labelsep = period}
\caption{ \ }
\label{figurecuboid}
\end{center}
\end{figure}
\end{exam}

Therefore, it is natural to search for a necessary and sufficient condition, wherein a chocolate bar may have a Grundy number calculated using the Nim-sum as the length, height, and width of the bar.

For a two-dimensional chocolate bar, we have already presented the necessary and sufficient condition in \cite{jgame}.

This article aims to answer the following question.\\\\
\noindent
\bf{Question. \ }\normalfont
\textit{What is the necessary and sufficient condition, wherein a three-dimensional chocolate bar may have a Grundy number $(x-1) \oplus (y-1) \oplus (z-1)$, where $x, y$, and $z$ are the length, height, and width of the bar, respectively?}\\

The remainder of this article is organized as follows.
In Section \ref{defandthem}, we briefly review some of the necessary concepts of the combinatorial game theory.

In Section \ref{twodimensionalchoco}, we present a summary of the research results of the two-dimensional chocolate-bar game published in \cite{jgame}, and utilize this result in Section \ref{threedimensionalchoco}.

In Section \ref{threedimensionalchoco}, we study three-dimensional chocolate bars such as the chocolate bars in Figure \ref{3dcho1}, and provide an answer to the above-mentioned research question.
The proof of the sufficient condition for a three-dimensional chocolate bar is straightforward if we utilize the result of the two-dimensional chocolate bar presented in \cite{jgame}; however, 
 the proof of the necessary condition for a three-dimensional chocolate bar is more difficult to obtain, even if we utilize the result presented in \cite{jgame}.
\section{Combinatorial Game Theory Definitions and Theorem }\label{defandthem}
Let $Z_{\geq0}$ be a set of nonnegative integers. 

For completeness, we briefly review some of the necessary concepts of combinatorial game theory; refer to $\cite{lesson}$ or $\cite{combysiegel}$ for more details. 

\begin{defn}\label{definitionfonimsum11}
	Let $x$ and $y$ be nonnegative integers. Expressing them in base 2, 
$x = \sum_{i=0}^n x_i 2^i$ and $y = \sum_{i=0}^n y_i 2^i$ with $x_i,y_i \in \{0,1\}$.
	We define the \textit{nim-sum}, $x \oplus y$, as
	\begin{equation}
		x \oplus y = \sum\limits_{i = 0}^n {{w_i}} {2^i},
	\end{equation}
	where $w_{i}=x_{i}+y_{i} \ (\bmod\ 2)$.
\end{defn}

\begin{lemma}\label{alemmafornimsum}
	Let $x$, $y$, $z \in  Z_{\geq0}$.
	If $y \ne z$, then $x \oplus y \ne x \oplus z. $
\end{lemma}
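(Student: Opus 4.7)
The plan is to argue directly from the bit-by-bit definition of nim-sum given in Definition \ref{definitionfonimsum11}, since the usual slick proof (XOR both sides by $x$ and cancel $x\oplus x=0$) relies on associativity and the self-inverse property, which have not yet been recorded in the paper.

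First I would write $x = \sum_{i=0}^{n} x_i 2^i$, $y = \sum_{i=0}^{n} y_i 2^i$, and $z = \sum_{i=0}^{n} z_i 2^i$ in base $2$, padding with leading zeros so that all three expansions share the same length $n+1$. By the definition of nim-sum, the $i$-th binary digit of $x\oplus y$ is $x_i + y_i \pmod 2$ and the $i$-th binary digit of $x\oplus z$ is $x_i + z_i \pmod 2$.

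The hypothesis $y\neq z$ means that the binary expansions of $y$ and $z$ disagree in at least one position; let $j$ be such an index, so $y_j \neq z_j$, i.e.\ $\{y_j,z_j\}=\{0,1\}$. Then
\begin{equation*}
(x_j + y_j) - (x_j + z_j) \equiv y_j - z_j \equiv 1 \pmod 2,
\end{equation*}
so the $j$-th binary digits of $x\oplus y$ and $x\oplus z$ differ. Since two nonnegative integers with different binary expansions are unequal, we conclude $x\oplus y \neq x\oplus z$.

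I do not expect any real obstacle here: the lemma is a one-line consequence of the digit-wise definition. The only small point to be careful about is making sure the three expansions use a common upper index $n$ (so that the bitwise comparison makes sense), which is handled by zero-padding.
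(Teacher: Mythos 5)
Your proof is correct, but it takes a different route from the paper's. The paper argues by contraposition in one line: assuming $x \oplus y = x \oplus z$, it writes $y = x \oplus x \oplus y = x \oplus x \oplus z = z$, silently invoking associativity of $\oplus$ and the identity $x \oplus x = 0$ (neither of which is stated or proved in the paper, though both are standard). You instead work directly from the digit-wise definition in Definition \ref{definitionfonimsum11}: locate a bit position $j$ where $y_j \neq z_j$ and observe that the $j$-th bits of $x \oplus y$ and $x \oplus z$ then differ, so the two nim-sums are distinct integers. Your version is more self-contained given what the paper has actually established at that point, at the cost of a few extra lines; the paper's version is shorter but leans on unproved (if universally known) algebraic facts about $\oplus$. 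Your remark about zero-padding the expansions to a common length $n$ is a sensible bit of care, and the argument has no gaps.
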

\begin{proof}
If $x \oplus y = x \oplus z$, then
$y= x \oplus x \oplus y =x \oplus  x \oplus z = z$.
\end{proof}

As chocolate-bar games are impartial games without draws, only two outcome classes are possible.
\begin{defn}\label{NPpositions}
	$(a)$ A position is referred to as a $\mathcal{P}$-\textit{position} if it is a winning position for the previous player (the player who just moved), as long as he/she plays correctly at every stage.\\
	$(b)$ A position is referred to as an $\mathcal{N}$-\textit{position} if it is a winning position for the next player, as long as he/she plays correctly at every stage.
\end{defn}

\begin{defn}\label{sumofgames}
	The \textit{disjunctive sum} of the two games, denoted by $\mathbf{G}+\mathbf{H}$, is a super-game, where a player may move either in $\mathbf{G}$ or $\mathbf{H}$, but not in both.
\end{defn}

\begin{defn}\label{defofmove}
	For any position $\mathbf{p}$ of game $\mathbf{G}$, there is a set of positions that can be reached by precisely one move in $\mathbf{G}$, which we denote as \textit{move}$(\mathbf{p})$. 
\end{defn}

\begin{rem}
Note that \ref{examplecoordinates} and \ref{examofmove11} are examples of a \textit{move}.
\end{rem}

\begin{defn}\label{defofmexgrundy}
	$(i)$ The \textit{minimum excluded value} ($\textit{mex}$) of a set $S$ of nonnegative integers is the least nonnegative integer that is not in S. \\
	 $(ii)$ Let $\mathbf{p}$ be a position of an impartial game. The associated \textit{Grundy number} is denoted by $G(\mathbf{p})$, and is 
 recursively defined by 
	$G(\mathbf{p}) = \textit{mex}\{G(\mathbf{h}): \mathbf{h} \in move(\mathbf{p})\}.$
\end{defn}

\begin{lemma}\label{defofmex2}
	Let $S$ be a set of nonnegative integers and $\textit{mex}(S) = m$ for some $m \in Z_{\geq0}$. Then, $\{k: k < m \text{ and } k \in Z_{\geq0}\} \subset S$. 
\end{lemma}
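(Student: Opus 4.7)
The plan is to derive the conclusion directly from the definition of \textit{mex}, by a short contradiction argument. First I would fix the notation from Definition \ref{defofmexgrundy}$(i)$: since $\textit{mex}(S) = m$, the value $m$ is the least nonnegative integer not belonging to $S$. In particular, two facts are available by definition: $m \notin S$, and no element of $Z_{\geq 0}$ strictly smaller than $m$ shares the property of being absent from $S$.

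Next I would set up the contrapositive. Suppose, for contradiction, that the claimed inclusion fails, so there exists some $k \in Z_{\geq 0}$ with $k < m$ and $k \notin S$. Then $k$ is a nonnegative integer not in $S$ that is strictly smaller than $m$, contradicting the minimality clause in the definition of \textit{mex}. Hence every $k \in Z_{\geq 0}$ with $k < m$ must satisfy $k \in S$, yielding $\{k : k < m \text{ and } k \in Z_{\geq 0}\} \subset S$.

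There is really no obstacle here: the statement is essentially an unpacking of the word ``minimum'' in ``minimum excluded value,'' and the proof is a one-line contradiction. I would keep the write-up to two or three sentences, with no case analysis or auxiliary lemmas required.
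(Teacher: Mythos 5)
Your proof is correct and takes essentially the same route as the paper, which simply cites Definition \ref{defofmexgrundy}$(i)$; your contradiction argument is just that definition unpacked explicitly.
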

\begin{proof}
	This also follows directly from Definition \ref{defofmexgrundy}.
\end{proof}

\begin{lemma}\label{grundysmaller}
	If $G(\mathbf{p}) > x$ for some $x \in Z_{\geq0}$, then $\mathbf{h} \in move(\mathbf{p})$ exists,
	such that $G(\mathbf{h}) = x$.
\end{lemma}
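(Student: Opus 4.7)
The plan is to derive this directly from the definitions of Grundy number and \textit{mex}, using Lemma \ref{defofmex2} as the key intermediate step. The statement is essentially a rephrasing of the defining property of \textit{mex}: every nonnegative integer strictly less than the \textit{mex} of a set must belong to that set.

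First, I would set $S = \{G(\mathbf{h}) : \mathbf{h} \in move(\mathbf{p})\}$, so that by Definition \ref{defofmexgrundy}(ii), $G(\mathbf{p}) = \textit{mex}(S)$. Next, invoking the hypothesis $G(\mathbf{p}) > x$ together with Lemma \ref{defofmex2} applied to $S$ with $m = G(\mathbf{p})$, I obtain $\{k : k < G(\mathbf{p})\text{ and } k \in Z_{\geq 0}\} \subset S$. Since $x \in Z_{\geq 0}$ and $x < G(\mathbf{p})$, this containment gives $x \in S$. Unpacking the definition of $S$ then yields some $\mathbf{h} \in move(\mathbf{p})$ with $G(\mathbf{h}) = x$, which is exactly the conclusion.

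There is no real obstacle here; the lemma is an immediate unfolding of definitions and the one preceding lemma. The only thing to be a little careful about is citing Lemma \ref{defofmex2} precisely and making sure the nonnegativity hypothesis on $x$ (built into $x \in Z_{\geq 0}$) is explicitly used when invoking it. Given its short and mechanical nature, I would present it as a two- or three-line proof rather than a full argument.
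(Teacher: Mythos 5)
Your proof is correct and takes exactly the same route as the paper, which simply states that the lemma "follows directly from Lemma \ref{defofmex2} and Definition \ref{defofmexgrundy}"; your write-up just makes the application of Lemma \ref{defofmex2} to the set $S=\{G(\mathbf{h}):\mathbf{h}\in move(\mathbf{p})\}$ explicit. No issues.
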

\begin{proof}
	This follows directly from Lemma \ref{defofmex2} and Definition \ref{defofmexgrundy}.
\end{proof}

The next result demonstrates the usefulness of the Sprague--Grundy theory in impartial games.
\begin{theorem}\label{theoremofsumg}

Let $\mathbf{G}$ and $\mathbf{H}$ be impartial rulesets, and $G_{\mathbf{G}}$ and $G_{\mathbf{H}}$, respectively, be the Grundy numbers of game $\mathbf{g}$ played under the rules of $\mathbf{G}$ and game $\mathbf{h}$ played under the rules of $\mathbf{H}$. Then, we have the following:\\
	$(i)$ For any position $\mathbf{g}$ of $\mathbf{G}$, 
	$G_{\mathbf{G}}(\mathbf{g})=0$, if and only if $\mathbf{g}$ is a $\mathcal{P}$-position.\\
	$(ii)$ The Grundy number of position $\{\mathbf{g},\mathbf{h}\}$ in game $\mathbf{G}+\mathbf{H}$ is
	$G_{\mathbf{G}}(\mathbf{g})\oplus G_{\mathbf{H}}(\mathbf{h})$.
\end{theorem}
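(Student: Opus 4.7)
The plan is to prove both parts by induction on the game position, relying on the well-foundedness of the game graph so that every position has only finitely many followers and every play terminates. Part (i) is the standard $\mathcal{P}/\mathcal{N}$-classification, and part (ii) is the sum rule; both will fall out of a single \emph{mex}-level argument once the inductive machinery is in place.

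For part (i), the base case is a terminal $\mathbf{g}$: here $\textit{move}(\mathbf{g}) = \emptyset$ gives $G_{\mathbf{G}}(\mathbf{g}) = \textit{mex}(\emptyset) = 0$, and by Definition~\ref{NPpositions} the player to move loses, so $\mathbf{g}$ is a $\mathcal{P}$-position. For the inductive step, if $G_{\mathbf{G}}(\mathbf{g}) = 0$ then every follower has positive Grundy value, is an $\mathcal{N}$-position by the induction hypothesis, and hence $\mathbf{g}$ is a $\mathcal{P}$-position; conversely, if $G_{\mathbf{G}}(\mathbf{g}) > 0$ then some follower has Grundy value $0$, is a $\mathcal{P}$-position by induction, and the mover wins by moving there.

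For part (ii), I would induct on the pair $(\mathbf{g},\mathbf{h})$ in the sum game. Writing $a = G_{\mathbf{G}}(\mathbf{g})$, $b = G_{\mathbf{H}}(\mathbf{h})$, and $c = a \oplus b$, the proof reduces to the two conditions defining $\textit{mex}$: no follower of $(\mathbf{g},\mathbf{h})$ in $\mathbf{G}+\mathbf{H}$ has Grundy value $c$, and every $d$ with $0 \le d < c$ is realised by some follower. The first condition is straightforward: by Definition~\ref{sumofgames} a move acts on exactly one component, replacing say $\mathbf{g}$ by some $\mathbf{g}'$ with $a' = G_{\mathbf{G}}(\mathbf{g}') \ne a$; the induction hypothesis makes the new sum position have Grundy value $a' \oplus b$, which by Lemma~\ref{alemmafornimsum} differs from $a \oplus b = c$, and moves on the $\mathbf{H}$-side are symmetric.

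The main obstacle is the second condition, and the standard bit-flipping trick is what I would use. Given $d < c$, set $e = c \oplus d$ and pick out the leading binary digit of $e$; since $d < c$, this digit must be a $1$-bit of $c$, hence a $1$-bit of exactly one of $a$ or $b$. Without loss of generality it is a $1$-bit of $a$, so $a \oplus e < a$, because xor-ing $a$ with $e$ flips that leading bit from $1$ to $0$ and only perturbs strictly lower bits. Lemma~\ref{grundysmaller} then supplies a move $\mathbf{g} \to \mathbf{g}'$ with $G_{\mathbf{G}}(\mathbf{g}') = a \oplus e$, and by the induction hypothesis the resulting sum follower has Grundy value $(a \oplus e) \oplus b = c \oplus e = d$. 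This closes the \textit{mex} verification and completes the induction.
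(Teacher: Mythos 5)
Your proof is correct and is the standard textbook argument for the Sprague--Grundy theorem: the $\mathcal{P}$/$\mathcal{N}$ classification by induction for part (i), and the two-sided \textit{mex} verification with the leading-bit trick of $e = c \oplus d$ for part (ii), correctly invoking Lemma~\ref{alemmafornimsum} for the ``no follower attains $c$'' half and Lemma~\ref{grundysmaller} for the ``every $d<c$ is attained'' half. The paper itself gives no proof of this theorem and simply cites \cite{lesson}, and your argument is essentially the one found there, so there is nothing to compare beyond confirming that your version is complete and sound.
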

For proof of this theorem, see $\cite{lesson}$.\\

With  Theorem \ref{theoremofsumg}, we can find a $\mathcal{P}$-position by calculating the Grundy numbers and a $\mathcal{P}$-position of the sum of two games by calculating the Grundy numbers of two games.
Therefore, Grundy numbers are an important research topic in combinatorial game theory.
 \section{Two-Dimensional Chocolate Bar}\label{twodimensionalchoco}
Here, the authors define two-dimensional chocolate bars, and present some of their results. Since the operation of cutting and defining Grundy numbers are difficult to understand in the case of three-dimensional bars, the authors present examples \ref{examplecoordinates} and \ref{examofmove11} of two-dimensional chocolate bars.
   The authors  also present the already published Theorem \ref{theoregrundyforf} and a new lemma with a proof as Lemma \ref{lemmaforfxy}. 
   We use Theorem \ref{theoregrundyforf} to prove Lemma \ref{lemmaforfxy} and Theorem \ref{necessarycond3d}  in Section \ref{threedimensionalchoco}. 
Further, we use Lemma \ref{lemmaforfxy} to prove Theorem \ref{sufficientcond3d}  in Section \ref{threedimensionalchoco}.  The employed method involves cutting three-dimensional chocolate bars into sections, and then, applying Theorem \ref{theoregrundyforf} and Lemma \ref{lemmaforfxy} to these sections. Note that a section of three-dimensional chocolate bar is a two-dimensional chocolate bar.   
 
The authors have already determined the necessary and sufficient condition for the Grundy number is 
$(m-1) \oplus (n-1)$ when the width of the chocolate bar monotonically increases with respect to the distance from the bitter square, where $m$ is the maximum width of the chocolate bar, and $n$ is the maximum horizontal distance from the bitter part.
This result has been published in \cite{jgame}, and presented in  Theorem \ref{theoregrundyforf} of this section.
 
\begin{defn}\label{definitionoffunctionf0}
	Function $f$ of $Z_{\geq0}$ into itself is said to be \textit{monotonically increasing} if
	$f(u) \leq f(v)$ for $u,v \in Z_{\geq0}$, with $u \leq v$.
\end{defn} 

\begin{defn}\label{defofbarwithfunc}
	Let $f$ be a monotonically increasing function defined by Definition \ref{definitionoffunctionf0}.
	For $y,z \in Z_{\geq0}$, the chocolate bar has $z+1$ columns, where the 0-th column is the bitter square, and the height of the $i$-th column is
	$t(i) = \min (f(i),y) +1$ for i = 0,1,...,z. We denote this as $CB(f,y,z)$.\\

	Thus, the height of the $i$-th column is determined by the value of $\min (f(i),y) +1$, which is determined by $f$, $i$, and $y$.
\end{defn}

\begin{defn}\label{defofchocogame}
Each player takes their turn to break the bar in a straight line along the grooves into two pieces, and eats the piece without the bitter part. The player who breaks the chocolate bar and eats it, leaving his/her opponent with the single bitter block (black block), is the winner. 
\end{defn}

We fix function $f$ for chocolate bar $CB(f,y,z)$, and call $y,z$ as the coordinates of $CB(f,y,z)$.

\begin{exam}\label{examplecoordinates}
Let $f(t)$  $= \lfloor \frac{t}{2}\rfloor$, where $= \lfloor \ \   \rfloor$ is the floor function.
Here, we present examples of $CB(f,y,z)$-type chocolate bars. Note that function $f$ defines the shape of the bar, and 
the two coordinates, $y$ and $z$, represent the number of grooves above and to the right of the bitter square, respectively. 
Because we use a fixed function $f$, we represent the chocolate-bar positions by coordinates $y,z$.

\begin{figure}[H]
\begin{center}
\includegraphics[height=1.65cm]{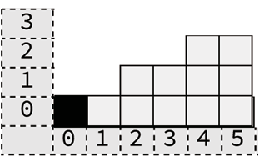}
\caption{$\{2,5\}$}
\label{2yzchoco25} 
\end{center}
\end{figure}
\vspace{0.3cm}

\begin{figure}[H]
\begin{center}
\includegraphics[height=1.65cm]{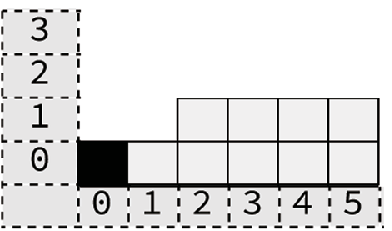}
\caption{$\{1,5\}$}
\label{2yzchoco15}
\end{center}
\end{figure}
\vspace{0.3cm}

\begin{figure}[H]
\begin{center}
\includegraphics[height=1.65cm]{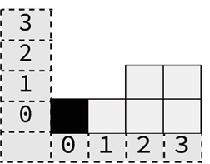}
\caption{$\{1,3\}$}
\label{2yzchoco13}
\end{center}
\end{figure}
\vspace{0.3cm}

\begin{figure}[H]
\begin{center}
\includegraphics[height=1.65cm]{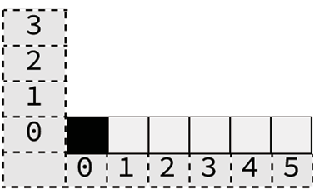}
\caption{$\{0.5\}$}
\label{2yzchoco05}
\end{center}
\end{figure}
\vspace{0.3cm}

\end{exam}
For a fixed function $f$, we define $move_f$ for each position $\{y,z\}$ of the chocolate bar $CB(f,y,z)$. 
Set $move_f(\{y,z\})$ comprises positions of the chocolate bar obtained by cutting the chocolate bar $CB(f,y,z)$ once, and $move_f$ represents a special case of $move$ defined by Definition \ref{defofmove}.
\begin{defn}\label{moveofchocoh}
	For $y,z \in Z_{\ge 0}$, we define \\
	$move_f(\{y,z\})=\{\{v,z \}:v<y \} \cup  \{ \{\min(y, f(w)),w \}:w<z \}$, where $v,w \in Z_{\ge 0}$.
\end{defn}

\begin{rem}
For a fixed function $f$, we use $move(\{y,z\})$ instead of 
$move_f(\{y,z\})$ for convenience. 
\end{rem}

\begin{exam}\label{examofmove11}
Here, we elucidate $move_f$, when $f(t)$  $= \lfloor \frac{t}{2}\rfloor$.
 If we start with position $\{y,z\}=\{2,5\}$ in Figure \ref{2yzchoco25} and reduce $z=5$ to $z=3$, the y-coordinate (first coordinate) will be $\min(2, \lfloor 3/2 \rfloor )=\min(2,1)=1$.
 
 Therefore, we have $\{1,3\} \in move_f(\{ 2,5 \})$; i.e., we obtain $\{1,3\}$ in Figure \ref{2yzchoco13} by cutting $\{ 2,5 \}$. It can be easily determined that

 $\{1,5\}, \{0,5\} \in move_f(\{2,5\})$, $\{1,3\}$

 $  \in move_f(\{1,5\})$, and $\{0,5\} \notin move_f(\{1,3\})$.
 
See Figures \ref{2yzchoco25}, \ref{2yzchoco15}, \ref{2yzchoco13}, and \ref{2yzchoco05}.
\end{exam}

According to Definitions \ref{defofmexgrundy} and \ref{moveofchocoh}, we define the Grundy number of a two-dimensional chocolate bar.

\begin{defn}\label{defofgrundy2d}
	For $y,z \in Z_{\ge 0}$, we define \\
	$\mathcal{G}(\{y,z\})=\textit{mex}(\{\mathcal{G}(\{v,z \}):v<y, v\in Z_{\ge 0}  \} \cup  \{\mathcal{G}(\{ \min(y, f(w)),w \}):w<z,w\in Z_{\ge 0} \})$.
\end{defn}

\begin{defn}\label{definitionoffunctionf}
		Let $h$ be a monotonically increasing function defined by Definition \ref{definitionoffunctionf0}.
Function $h$ is said to have the $NS$ property, if $h$ satisfies condition $(a)$.\\
	$(a)$ Suppose that 
	\begin{equation}
		\lfloor \frac{z}{2^i}\rfloor = \lfloor \frac{z^{\prime}}{2^i}\rfloor \nonumber
	\end{equation}
	for some $z $, $ z^{\prime} \in Z_{\geq 0}$, and some natural number $i$.
	Then, 
	\begin{equation}
		\lfloor \frac{h(z)}{2^{i-1}}\rfloor = \lfloor \frac{h(z^{\prime})}{2^{i-1}}\rfloor. \nonumber
	\end{equation}
\end{defn}

\begin{theorem}\label{theoregrundyforf}
Let $h$ be a monotonically increasing function defined by Definition \ref{definitionoffunctionf}.
 Let $\mathcal{G}_h$ be the Grundy number of $CB(h,y,z)$. Then,  $\mathcal{G}_h(\{y,z\}) = y \oplus z$, 
 if and only if $h$ has the $NS$ property as per Definition \ref{definitionoffunctionf}. 
\end{theorem}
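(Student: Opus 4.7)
The plan is to prove the biconditional by strong induction on $y + z$ for the sufficiency direction and by contrapositive for the necessity direction. Throughout, I use Definition \ref{defofgrundy2d} and the observation that moves from $\{y, z\}$ produce exactly the successors $\{v, z\}$ with $v < y$ (vertical cuts) and $\{\min(y, h(w)), w\}$ with $w < z$ (horizontal cuts).

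For sufficiency, assume $h$ has the NS property and that $\mathcal{G}_h(\{y', z'\}) = y' \oplus z'$ holds for every $y' + z' < y + z$. Then the Grundy values at successors of $\{y, z\}$ are $v \oplus z$ for $v < y$ and $\min(y, h(w)) \oplus w$ for $w < z$, and I must show the \textit{mex} equals $y \oplus z$. This splits into two parts: (i) $y \oplus z$ is not a successor Grundy value, and (ii) every $k < y \oplus z$ is. For (i), vertical successors miss $y \oplus z$ by Lemma \ref{alemmafornimsum}; for horizontal successors, one argues that $\min(y, h(w)) \oplus w = y \oplus z$ together with the NS property applied to the block of $w$-values containing $z$ forces $w = z$, a contradiction. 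For (ii), given $k < y \oplus z$, let $j$ be the highest bit where $k$ and $y \oplus z$ differ; exactly one of $y, z$ has bit $j$ equal to $1$. If it is $y$, then $v := k \oplus z$ satisfies $v < y$ and reaches $k$ via a vertical cut. If it is $z$, I seek $w < z$ in the same $2^{j+1}$-block as $z$ such that either $h(w) \ge y$ and $w \oplus y = k$, or $h(w) < y$ and $h(w) \oplus w = k$. The NS hypothesis, applied at index $j+1$, pins down $\lfloor h(w)/2^j \rfloor$ throughout this block, and this is exactly what makes one of the two subcases realizable.

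For necessity, I argue contrapositively. Suppose NS fails at some pair $z, z'$ and index $i$, so that $\lfloor z/2^i \rfloor = \lfloor z'/2^i \rfloor$ but $\lfloor h(z)/2^{i-1} \rfloor \ne \lfloor h(z')/2^{i-1} \rfloor$. Taking a minimal such failure and choosing $y$ with its top bits aligned to the differing bit of $h(z) \oplus h(z')$ (in particular, $y$ chosen so the min-operation distinguishes $h(z)$ from $h(z')$ in the decisive bit), the case analysis of sufficiency breaks at $\{y, z'\}$: either some $k < y \oplus z'$ cannot be realized by any successor, or a horizontal cut produces the value $y \oplus z'$ itself. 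Either way $\mathcal{G}_h(\{y, z'\}) \ne y \oplus z'$. The main obstacle is the block-by-block bookkeeping relating the arithmetic of $\min(y, h(w))$ to the bit structure of $y \oplus z$: the asymmetric $2^i$-versus-$2^{i-1}$ scaling in NS is calibrated to control exactly one additional bit of $h(w)$ on each $2^i$-block, which is precisely what the horizontal-cut analysis demands, and disentangling this interaction in the subcase where $h(w) < y$ (so the $\min$ is active) is where the argument becomes most delicate.
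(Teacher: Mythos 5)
The paper does not actually prove this theorem: it is imported verbatim from Theorems 4 and 5 of \cite{jgame}, so there is no in-paper argument to measure you against. Your induction-plus-mex strategy is indeed the one the cited source follows, but your write-up asserts precisely the steps that constitute the real content of that proof, and it misses a hypothesis without which the statement is false. Concretely: the theorem needs the implicit restriction $y \leq h(z)$, which you never invoke but on which your exclusion step silently depends. Without it, take $h \equiv 0$ (monotonically increasing, and it has the $NS$ property trivially); from $\{1,1\}$ the horizontal cut to $w=0$ reaches $\{\min(1,h(0)),0\}=\{0,0\}$ with Grundy value $0 = 1 \oplus 1$, so $y \oplus z$ \emph{is} attained by a successor and $\mathcal{G}_h(\{1,1\})=2$. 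Your claim that the $NS$ property ``forces $w=z$'' in the $h(w)<y$ subcase only goes through because $h(w) < y \leq h(z)$ combined with $\lfloor h(w)/2^{j}\rfloor = \lfloor h(z)/2^{j}\rfloor$ traps $y$ in the same $2^{j}$-block as $h(w)$; with $y > h(z)$ there is no contradiction to be had.

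The other two gaps are in the steps you describe rather than prove. In the realizability step, when the decisive bit lies in $z$ and the natural candidate $w_0 = k \oplus y$ has $h(w_0) < y$, the move to $w_0$ yields $h(w_0) \oplus w_0$, which in general differs from $k$ in the low-order bits, so you must produce a \emph{different} $w$ with $h(w)\oplus w = k$; this is exactly the content of Lemma \ref{lemmaforfxy} (Lemma 4 of \cite{integer2021}), and your sentence ``this is exactly what makes one of the two subcases realizable'' states that conclusion in place of the descent argument that establishes it. In the necessity direction you name a strategy (``choose $y$ aligned to the differing bit of $h(z)\oplus h(z')$'') but exhibit no position and verify nothing; note also that to evaluate the mex at the chosen position you must either assume $\mathcal{G}_h = \oplus$ holds at all of its successors (a proof by contradiction) or separately dispose of the case in which some successor already violates the formula --- the bare ``contrapositive'' framing elides this. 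As it stands the proposal is a plausible road map whose hard parts are labels rather than proofs.
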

For proof of this theorem, see Theorems 4 and 5 presented in \cite{jgame}.\\\\

The following is a new lemma for two-dimensional chocolate bars, and it will be used for three-dimensional chocolate bars in Section \ref{threedimensionalchoco}. 

\begin{lemma}\label{lemmaforfxy}
Suppose that $h$ has the $NS$ property as per Definition \ref{definitionoffunctionf}, and $y \leq h(z)$ for $y,z \in Z_{\geq 0}$.
Let
\begin{equation}\
	  A =  \{ y \oplus (z-k): k=1,2,\cdots,z \} \nonumber 
\end{equation}
and 
\begin{equation}
	B =  \{\min(y,h(z-k)) \oplus (z-k): k=1,2,\cdots,z  \}. \nonumber 
\end{equation}
Then, $A = B$.
\end{lemma}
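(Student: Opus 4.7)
The plan is to reindex by $w = z - k$, so that $A = \{y \oplus w : 0 \le w < z\}$ and $B = \{\min(y, h(w)) \oplus w : 0 \le w < z\}$, and then to construct a bijection $\phi : \{0, 1, \ldots, z-1\} \to \{0, 1, \ldots, z-1\}$ satisfying $y \oplus \phi(w) = \min(y, h(w)) \oplus w$ for all $w$. Once such a $\phi$ exists, $B = \{y \oplus \phi(w) : 0 \le w < z\} = \{y \oplus v : 0 \le v < z\} = A$, and we are done. The natural candidate is $\phi(w) := w$ when $h(w) \ge y$ and $\phi(w) := y \oplus h(w) \oplus w$ when $h(w) < y$; in both cases the desired identity is a one-line check, so the substance of the proof lies in verifying (i) $\phi(w) < z$ and (ii) $\phi$ is injective.

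For (i), the only nontrivial case is $h(w) < y \le h(z)$. I would let $j$ be the highest bit position at which $w$ and $z$ differ; since $w < z$, the bit of $w$ at position $j$ is $0$ while the bit of $z$ at position $j$ is $1$, and $\lfloor w / 2^{j+1} \rfloor = \lfloor z / 2^{j+1} \rfloor$. Applying the NS property of Definition~\ref{definitionoffunctionf} with $i = j+1$ yields $\lfloor h(w) / 2^j \rfloor = \lfloor h(z) / 2^j \rfloor$, so $h(w)$ and $h(z)$ agree on all bits of position $\ge j$. Combined with the sandwich $h(w) < y \le h(z)$, this forces $\lfloor y / 2^j \rfloor = \lfloor h(w) / 2^j \rfloor$, and hence $y \oplus h(w) < 2^j$. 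Consequently $\phi(w) = y \oplus h(w) \oplus w$ agrees with $w$ on every bit of position $\ge j$; its bit at position $j$ is therefore $0$ while that of $z$ is $1$, so $\phi(w) < z$.

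For (ii), suppose $\phi(w_1) = \phi(w_2)$ with $w_1 \ne w_2$, let $j$ be the highest bit where $w_1$ and $w_2$ differ, and use the NS property again to conclude that $h(w_1)$ and $h(w_2)$ agree on all bits of position $\ge j$. The equality $\phi(w_1) = \phi(w_2)$ is equivalent to $\min(y, h(w_1)) \oplus \min(y, h(w_2)) = w_1 \oplus w_2$. A short case analysis on whether each of $h(w_1), h(w_2)$ is $\ge y$ or $< y$ shows the left-hand side is strictly less than $2^j$ in every case; the crux is that when exactly one of $h(w_1), h(w_2)$ is less than $y$, the other is at least $y$, so $y$ is sandwiched between them and therefore also agrees with them on bits of position $\ge j$. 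The right-hand side, on the other hand, is at least $2^j$ because its bit at position $j$ is $1$, giving a contradiction. Thus $\phi$ is injective and hence bijective on the finite set $\{0, \ldots, z-1\}$. The main obstacle is simply the bit-level bookkeeping at the highest differing bit; once the NS property is recognised as exactly the tool that freezes the high bits of $h$ once the high bits of its argument are fixed, the rest falls into place.
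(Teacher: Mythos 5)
The paper itself gives no proof of this lemma---it simply cites Lemma 4 of \cite{integer2021}---so there is no in-paper argument to compare against line by line. Judged on its own, your proof is correct and complete. The bijection $\phi(w)=w$ when $h(w)\ge y$ and $\phi(w)=y\oplus h(w)\oplus w$ when $h(w)<y$ does satisfy $y\oplus\phi(w)=\min(y,h(w))\oplus w$, and both of the substantive claims check out: for (i), with $j$ the highest bit where $w$ and $z$ differ, the $NS$ property applied at $i=j+1$ gives $\lfloor h(w)/2^j\rfloor=\lfloor h(z)/2^j\rfloor$, the sandwich $h(w)<y\le h(z)$ then forces $\lfloor y/2^j\rfloor=\lfloor h(w)/2^j\rfloor$, hence $y\oplus h(w)<2^j$ and $\phi(w)<z$; for (ii), the comparison $\min(y,h(w_1))\oplus\min(y,h(w_2))<2^j\le w_1\oplus w_2$ rules out all three cases. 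The only hypothesis you lean on that deserves explicit mention is $y\le h(z)$, which you do invoke exactly where it is needed (to get the upper half of the sandwich in part (i)); note that part (ii) needs no such hypothesis. The usual presentation of this kind of lemma in the cited literature is a two-sided inclusion $A\subseteq B$ and $B\subseteq A$, each established by exhibiting, for a given $k$, the matching index on the other side via the same bit-freezing consequence of the $NS$ property. Your single-permutation packaging is tidier: it proves both inclusions at once and makes the cardinality bookkeeping automatic (injectivity on a finite set gives surjectivity), at the cost of having to verify that $\phi$ really maps $\{0,\dots,z-1\}$ into itself, which is precisely your step (i).
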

This is Lemma 4 of \cite{integer2021}.

Thus far, we have only dealt with two-dimensional chocolate bars for monotonically increasing functions; however, we can similarly consider a three-dimensional chocolate bar $CB(f,y,z)$ for a function, $f$, that is not monotonically increasing by forming a monotonically increasing function $f^{\prime}$, such that
chocolate bars $CB(f,y,z)$ and $CB(f^{\prime},y,z)$ have the same mathematical structure as a game.

For example, the chocolate bar in Figure \ref{choco2511b} is formed by a function that does not monotonically increase, whereas the chocolate bar in Figure \ref{choco2511c} is formed by a monotonically increasing function; however, these two chocolate bars have the same mathematical structure as a game.

\begin{figure}[H]
\begin{center}
\includegraphics[height=1.8cm]{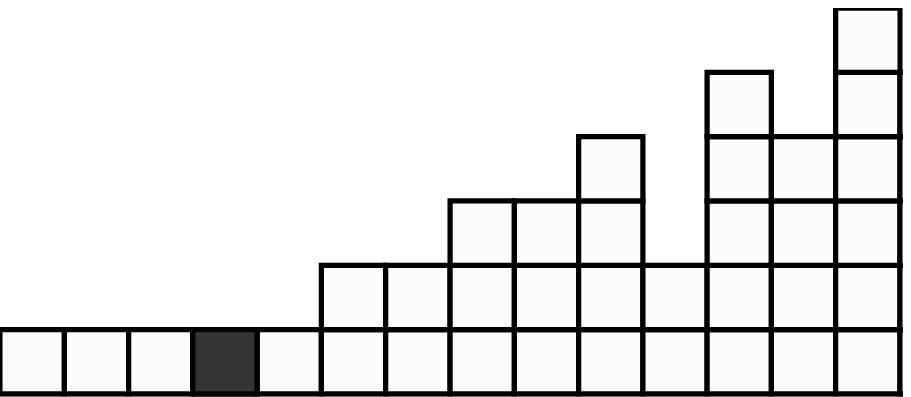}
\captionsetup{labelsep = period}
\caption{ \ }
\label{choco2511b}
\end{center}
\end{figure}
\begin{figure}[H]
\begin{center}
\includegraphics[height=1.8cm]{choco2511.eps}
\captionsetup{labelsep = period}
\caption{ \ }
\label{choco2511c}
\end{center}
\end{figure}
Therefore, it is adequate to study the case of a monotonically increasing function for two-dimensional chocolate bars.
\section{Three-Dimensional Chocolate Bar}\label{threedimensionalchoco}
In this section, we answer the research question that was presented in the \ref{introductionsection} section.
Theorems \ref{sufficientcond3d} and \ref{necessarycond3d} offer proofs for the sufficient and necessary condition, respectively.

\begin{defn}\label{definitionoffunctionf3d}
	Suppose that $F(u,v)\in Z_{\geq0}$ for $u,v \in Z_{\geq0}$. $F$ is said to be \textit{monotonically increasing} if $F(u,v) \leq F(x,z)$ for $x,z,u,v \in Z_{\geq0}$, with $u \leq x$ and $v \leq z$.
\end{defn} 
We generalize Definition \ref{defofbarwithfunc}, and define a three-dimensional chocolate bar.
\begin{defn}\label{defofbarwithfunc3d}
	Let $F$ be the monotonically increasing function in Definition \ref{definitionoffunctionf3d}.\\ 
	Let $x,y,z \in Z_{\geq0}$.
	The three-dimensional chocolate bar comprises a set of $1 \times 1 \times 1$ sized boxes. 
For $u,w \in Z_{\geq0}$, such that $u \leq x$ and $w \leq z$, the height of the column of position $(u,w)$ is $ \min (F(u,w),y) +1$, where $F$ is a monotonically increasing function. 
There is a bitter box in position $(0,0)$.
We denote this chocolate bar as $CB(F,x,y,z)$.\\
\end{defn}

\begin{defn}
We define a three-dimensional chocolate-bar game. Each player takes their turn to cut the bar on a plane that is horizontal or vertical along the grooves, and eats the broken piece. The player who manages to leave the opponent with a single bitter cubic box is the winner. 
\end{defn}

\begin{exam}
Here, we provide an example of a three-dimensional coordinate system and two examples of three-dimensional chocolate bars.\\
\begin{figure}[H]
\begin{center}
\includegraphics[height=2.8cm]{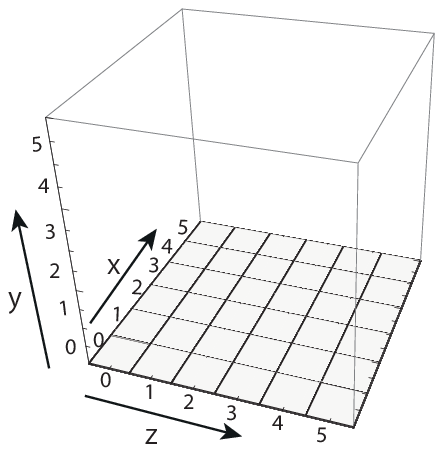}
\captionsetup{labelsep = period}
\caption{ \ }
\label{coordinate3d}
\end{center}
\end{figure}
\vspace{0.3cm}
\begin{figure}[H]
\begin{center}
\includegraphics[height=3.2cm]{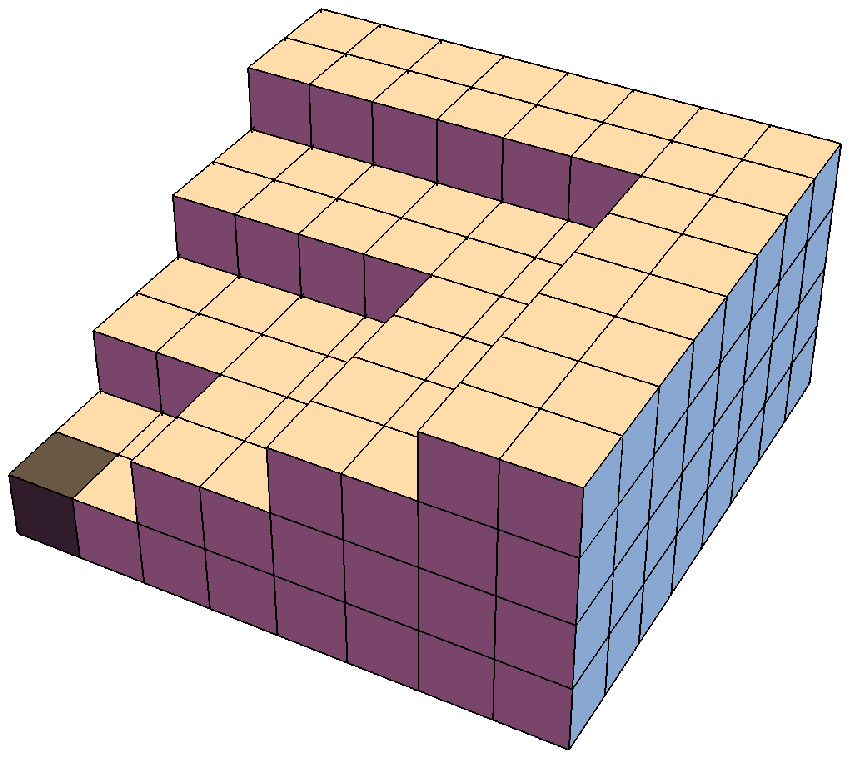}
\caption{$CB(F,7,3,7)$ \\
$F(x,z) $  $= \max(\lfloor \frac{x}{2}\rfloor,\lfloor \frac{z}{2}\rfloor)$.}
\label{3Dchocolate}
\end{center}
\end{figure}
\vspace{0.3cm}
\begin{figure}[H]
\begin{center}
\includegraphics[height=3.2cm]{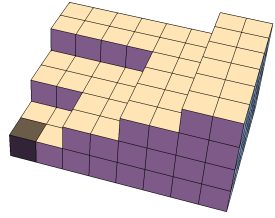}
\caption{$CB(F,5,3,7)$ \\
$F(x,z) $  $= \max(\lfloor \frac{x}{2}\rfloor,\lfloor \frac{z}{2}\rfloor)$.}
\label{3Dchocolate22}
\end{center}
\end{figure}
\end{exam}
Next, we define $move_F(\{x, y, z\})$ in Definition \ref{movefor3dimension}.
Set $move_F(\{x, y, z\})$  contains all the positions that can be reached from position $\{x, y, z\}$ in one step (directly).

\begin{defn}\label{movefor3dimension}
	For $x,y,z \in Z_{\ge 0}$, we define 
\begin{align}
 move_F(\{x,y,z\})= & \{\{u,\min(F(u,z),y),z \}:u<x \} \cup \{\{x,v,z \}:v<y \}   \nonumber \\
 \cup & \{ \{x,\min(y, F(x,w) ),w \}:w<z \}, \text{where $u,v,w \in Z_{\ge 0}$.} \nonumber
\end{align}	

\end{defn}
For example, when $F(x,z) $  $= \max(\lfloor \frac{x}{2}\rfloor,\lfloor \frac{z}{2}\rfloor)$, then $\{5,3,7\} \in move_F(\{7,3,7\})$, because
we obtain the chocolate bar shown in Figure \ref{3Dchocolate22} by reducing the third coordinate of the chocolate bar in Figure \ref{3Dchocolate} from $7$ to $5$.

\begin{rem}
For a fixed function $f$, we use $move(\{x,y,z\})$ instead of 
$move_F(\{x,y,z\})$ for convenience. 
\end{rem}

\begin{lemma}\label{lemmafornimsum3d}
We have the following equation for any $k,h,i \in Z_{\geq 0}$. 
	\begin{align}
		k \oplus h \oplus i= & mex(\{(k-t)\oplus h \oplus i:t=1,2,...,k \}, \\
		 \cup & \{k \oplus (h-t) \oplus i:t=1,2,...,h \} \cup \{k \oplus h \oplus(i-t):t=1,2,...,i \}).\nonumber
	\end{align}
\end{lemma}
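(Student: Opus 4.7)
The plan is to verify that the claimed value $n := k \oplus h \oplus i$ is precisely the \emph{mex} of the set $S$ on the right-hand side. By Definition \ref{defofmexgrundy}(i), this requires two things: first, that $n \notin S$; second, that every nonnegative integer smaller than $n$ belongs to $S$. This is exactly the standard verification that Nim with three heaps has Grundy value equal to the XOR of the heap sizes, so I expect no genuine obstacle; the work is just careful bookkeeping.

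For the first step, suppose for contradiction that $n = (k-t) \oplus h \oplus i$ for some $t \in \{1,\ldots,k\}$. Since $n = k \oplus h \oplus i$, applying Lemma \ref{alemmafornimsum} with the common summand $h \oplus i$ forces $k-t = k$, contradicting $t \geq 1$. The two analogous cases (reducing $h$ or $i$) are identical. Hence $n \notin S$.

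For the second step, fix $m \in Z_{\geq 0}$ with $m < n$ and set $d := m \oplus n$, so $d > 0$. Let $p$ be the position of the highest set bit of $d$. Because $m < n$, the bit at position $p$ of $n$ must be $1$ and of $m$ must be $0$ (otherwise the leading bit of $d$ would sit in $m$ rather than $n$, forcing $m>n$). Since bit $p$ of $n = k \oplus h \oplus i$ equals $1$, an odd number of $k, h, i$ carry a $1$ in position $p$; in particular at least one of them does. Without loss of generality assume $k$ has bit $p$ equal to $1$; the other two cases are symmetric. Now put $k' := k \oplus d$. Flipping the bit at position $p$ from $1$ to $0$ (and possibly altering lower-order bits) gives $k' < k$, so $t := k - k' \in \{1,\ldots,k\}$. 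Then
\begin{equation}
(k-t) \oplus h \oplus i \;=\; k' \oplus h \oplus i \;=\; (k \oplus d) \oplus h \oplus i \;=\; d \oplus n \;=\; m, \nonumber
\end{equation}
so $m \in S$.

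Combining the two steps, $n$ is not in $S$ but every smaller nonnegative integer is, so $n = \textit{mex}(S)$, as claimed. The only subtlety worth flagging is the argument that $m < n$ forces the leading bit of $d = m \oplus n$ to lie in $n$; if this is not made explicit, one cannot guarantee that $k' = k \oplus d$ is strictly less than $k$, and the reduction to a legal move $t \geq 1$ collapses. Everything else is routine application of the definition of $\oplus$ and of \emph{mex}.
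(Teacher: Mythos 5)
Your argument is correct: it is the standard Bouton-style verification that $k\oplus h\oplus i$ is the \emph{mex} of the options, with the key point (that the leading bit of $d=m\oplus n$ must lie in $n$, so that $k'=k\oplus d<k$ yields a legal $t\geq 1$) handled properly, and the exclusion step following from Lemma \ref{alemmafornimsum}. The paper itself omits any proof and simply cites Proposition 1.4 of \cite{combysiegel}, so your write-up supplies precisely the classical argument being referenced rather than taking a different route.
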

\begin{proof}
	We omit the proof because this is a well-known fact regarding Nim-sum $\oplus$. See proposition 1.4. (p.181) in \cite{combysiegel}.
\end{proof}

\begin{theorem}\label{sufficientcond3d}
	Let $F(x,z)$ be a monotonically increasing function.
	Let $g_n(z) = F(n,z)$ and $h_m(x) =F(x,m)$ for $n,m \in Z_{\geq 0}$. 
If $g_n$ and $h_m$ satisfy the $NS$ property in Definition \ref{definitionoffunctionf} for any fixed $n,m \in Z_{\geq 0}$, then the Grundy number of chocolate bar $CB(F,x,y,z)$ is 
	\begin{equation}
		\mathcal{G}(\{x,y,z\}) = x \oplus y \oplus z. \nonumber 
	\end{equation}
\end{theorem}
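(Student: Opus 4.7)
The plan is to proceed by strong induction on $x+y+z$. The base case $\{0,0,0\}$ holds because the move set is empty and $\mathrm{mex}(\emptyset)=0=0\oplus 0\oplus 0$. For the inductive step, Definition \ref{movefor3dimension} splits the successors of $\{x,y,z\}$ into three families of Grundy values:
\begin{align}
S_1 &= \{\mathcal{G}(\{u,\min(F(u,z),y),z\}) : 0\le u<x\},\nonumber\\
S_2 &= \{\mathcal{G}(\{x,v,z\}) : 0\le v<y\},\nonumber\\
S_3 &= \{\mathcal{G}(\{x,\min(y,F(x,w)),w\}) : 0\le w<z\}.\nonumber
\end{align}
Every entry in these sets lies at a strictly smaller coordinate sum, so the inductive hypothesis rewrites it as the XOR of its three coordinates. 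In particular, $S_2 = \{x\oplus v\oplus z : v<y\}$ directly. By Lemma \ref{lemmafornimsum3d}, to conclude $\mathcal{G}(\{x,y,z\})=x\oplus y\oplus z$ it then suffices to establish
\[
S_1 = \{u\oplus y\oplus z : u<x\} \qquad \text{and} \qquad S_3 = \{x\oplus y\oplus w : w<z\},
\]
because once this is done, $\mathrm{mex}(S_1\cup S_2\cup S_3)$ coincides exactly with the mex expression on the right-hand side of that lemma.

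Both remaining equalities are instances of Lemma \ref{lemmaforfxy} applied to the one-dimensional slices $g_x(\cdot)=F(x,\cdot)$ and $h_z(\cdot)=F(\cdot,z)$, each of which enjoys the $NS$ property by assumption. For $S_3$, I would take $h=g_x$ in the lemma and obtain
\[
\{y\oplus(z-k) : 1\le k\le z\} \;=\; \{\min(y,F(x,z-k))\oplus(z-k) : 1\le k\le z\};
\]
XOR-ing $x$ into every element converts this identity into the desired description of $S_3$. The argument for $S_1$ is entirely symmetric, replacing $g_x$ by $h_z$ and using the other slicing direction.

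The main obstacle is the side hypothesis $y\le h(z)$ in Lemma \ref{lemmaforfxy}, which specializes here to $y\le F(x,z)$. This is implicit in the intended interpretation of $CB(F,x,y,z)$: the coordinate $y$ records the actual maximum height attained at the far column $(x,z)$, so the inequality is automatic once one insists $\{x,y,z\}$ is a genuinely realized position. If instead the theorem is to cover arbitrary triples $(x,y,z)\in Z_{\geq0}^3$, a short preliminary reduction identifying $\{x,y,z\}$ with $\{x,F(x,z),z\}$ whenever $y>F(x,z)$ is required; once that identification is in place, the induction closes exactly as above and the remainder is essentially bookkeeping.
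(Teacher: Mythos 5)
Your proof is correct and follows essentially the route the paper attributes to \cite{integer2021}: induction on the coordinate sum combined with Lemma \ref{lemmafornimsum3d}, with the two nontrivial set identities supplied by applying Lemma \ref{lemmaforfxy} to the one-dimensional slices $g_x(\cdot)=F(x,\cdot)$ and $h_z(\cdot)=F(\cdot,z)$. The side condition $y\le F(x,z)$ that you flag is indeed the intended domain of the statement (it matches the hypothesis of Lemma \ref{lemmaforfxy} and is preserved by every move listed in Definition \ref{movefor3dimension}), so your induction closes as written.
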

This is Theorem 3 in \cite{integer2021}

\begin{lemma}
	Let $i \in Z_{\geq0}$ and $z < z^{\prime}$.  We have the following $(a)$ and $(b)$.\\
$(a)$
	\begin{equation}
		\lfloor \frac{z}{2^i}  \rfloor  =   \lfloor \frac{z^{\prime}}{2^i}  \rfloor   \nonumber ,
	\end{equation}
	if and only if $d \in Z_{\geq0}$ exists, such that 
	\begin{equation}
		d \times 2^i \leq z < z^{\prime} < (d+1) \times 2^i.   \nonumber 
	\end{equation}
$(b)$ Suppose that 	
	\begin{equation}
		\lfloor \frac{z}{2^i}  \rfloor  <   \lfloor \frac{z^{\prime}}{2^i} \rfloor.   \label{floorzsmaller}
	\end{equation}
Then, $c, s,t  \in Z_{\geq0}$ exists, such that $s \geq i$, $0 \leq t < 2^s$, and 
	\begin{equation}
	z =	c \times 2^{s+1} +t  < c \times 2^{s+1} + 2^s \leq  z^{\prime}.   \label{ineqfors}
	\end{equation}
\end{lemma}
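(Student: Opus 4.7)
\textbf{Strategy overall.} Both parts are elementary statements about the base-$2$ representations of $z$ and $z'$, so my plan is to work directly from the definition of the floor function, turning floor-equalities into intervals. Part $(a)$ is essentially a rewording of the definition of $\lfloor \cdot /2^i\rfloor$, while part $(b)$ amounts to locating the highest bit on which $z$ and $z'$ disagree and showing it lies at a position $\geq i$.

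\textbf{Part (a).} I would argue both implications using the defining property of the floor function: for any nonnegative integer $n$, the equation $\lfloor n/2^i\rfloor = d$ is equivalent to $d\cdot 2^i \le n < (d+1)\cdot 2^i$. Applying this to both $z$ and $z'$ under the hypothesis $\lfloor z/2^i\rfloor = \lfloor z'/2^i\rfloor = d$, and combining with $z<z'$, I obtain $d\cdot 2^i \le z < z' < (d+1)\cdot 2^i$. The converse is immediate: if such a $d$ exists, then both $\lfloor z/2^i\rfloor$ and $\lfloor z'/2^i\rfloor$ equal $d$.

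\textbf{Part (b).} Since $\lfloor z/2^i\rfloor < \lfloor z'/2^i\rfloor$, the nonnegative integers $z$ and $z'$ must differ in at least one binary digit of index $\geq i$. Let $s$ be the largest index such that the $s$-th binary digits of $z$ and $z'$ differ; by the preceding sentence, $s\ge i$. Because $z<z'$ and all digits above position $s$ agree, the $s$-th digit of $z$ is $0$ and the $s$-th digit of $z'$ is $1$. Writing out the binary expansions, there is a common ``high part'' $c\in Z_{\ge 0}$ determined by the digits of index $>s$, so that
\begin{equation}
z = c\cdot 2^{s+1} + t,\qquad z' = c\cdot 2^{s+1} + 2^s + t', \nonumber
\end{equation}
where $0\le t<2^s$ (digit $s$ of $z$ is $0$) and $0\le t'<2^s$ (digit $s$ of $z'$ is $1$, with lower-order digits arbitrary). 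These decompositions yield precisely
\begin{equation}
z = c\cdot 2^{s+1} + t < c\cdot 2^{s+1} + 2^s \le z',\nonumber
\end{equation}
which is inequality~\eqref{ineqfors}. To close the argument I need only verify $s\ge i$: this is exactly the consequence of the contrapositive of part $(a)$, since $\lfloor z/2^i\rfloor \ne \lfloor z'/2^i\rfloor$ forces some differing binary digit at position $\ge i$, and the highest such digit is our $s$.

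\textbf{Main obstacle.} The only genuinely nontrivial step is showing $s\ge i$ in part $(b)$; this is where the hypothesis \eqref{floorzsmaller} is actually used, and it is natural to reduce it to part $(a)$ rather than argue it from scratch. Everything else is bookkeeping with binary digits.
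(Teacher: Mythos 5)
Your proof is correct. Part (a) is exactly the standard characterization of the floor function applied to both $z$ and $z'$, and in part (b) your choice of $s$ as the highest binary position where $z$ and $z'$ differ, with $c$ the common high part and $t$ the low part of $z$, delivers precisely the chain $z = c\cdot 2^{s+1}+t < c\cdot 2^{s+1}+2^s \le z'$; the key step $s\ge i$ follows, as you say, because if all digits of index $\ge i$ agreed then $\lfloor z/2^i\rfloor$ and $\lfloor z'/2^i\rfloor$ would coincide (your attribution of this to the ``contrapositive of part (a)'' is slightly loose --- it is really the observation that $\lfloor \cdot/2^i\rfloor$ depends only on the digits of index $\ge i$ --- but the underlying fact is right and immediate). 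The paper itself gives no argument for this lemma, deferring entirely to Lemma 6 of the cited reference, so there is no in-paper proof to compare against; your binary-expansion argument is the natural self-contained one.
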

 This is Lemma 6 in \cite{integer2021}

\begin{theorem}\label{necessarycond3d}
Let $F(x,z)$ be a monotonically increasing function, and let $g_n(z) = F(n,z)$ and $h_m(x) =F(x,m)$ for $n,m \in Z_{\geq 0}$.
Suppose that the Grundy number of chocolate bar $CB(F,x,y,z)$ is
	\begin{equation}
		\mathcal{G}(\{x,y,z\}) = x \oplus y \oplus z.  \nonumber 
	\end{equation}
Then, $g_n$ and $h_m$ satisfy the $NS$ property in Definition \ref{definitionoffunctionf} for any fixed $n,m \in Z_{\geq 0}$.
\end{theorem}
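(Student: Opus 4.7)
I will prove the contrapositive by strong induction on $n$: if some $g_n$ fails NS, I will exhibit a position where the Grundy value is not $x\oplus y\oplus z$ (the statement for $h_m$ follows by swapping the roles of the first and third coordinates of the bar). For the base case $n=0$, no $x$-direction move exists at $x=0$, so $CB(F,0,y,z)$ is game-isomorphic to the two-dimensional bar $CB(g_0,y,z)$; the hypothesis forces $\mathcal{G}_{g_0}(\{y,z\})=y\oplus z$ for all $y,z$, and Theorem \ref{theoregrundyforf} then yields NS for $g_0$.

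For the inductive step, assume that $g_k$ and $h_k$ satisfy NS for every $k<n$, and suppose that $g_n$ fails NS. Using monotonicity of $g_n$, extract a witness $z<z'$ and $i\geq1$ with $\lfloor z/2^i\rfloor=\lfloor z'/2^i\rfloor$ but $\lfloor g_n(z)/2^{i-1}\rfloor<\lfloor g_n(z')/2^{i-1}\rfloor$, and invoke the unlabeled floor-structure lemma immediately preceding the theorem to produce $c,s,t\in Z_{\geq0}$ with $s\geq i-1$, $0\leq t<2^s$, $g_n(z)=c\cdot2^{s+1}+t$, and $c\cdot2^{s+1}+2^s\leq g_n(z')$. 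Set $y:=c\cdot2^{s+1}+2^s$, so that $g_n(z)<y\leq g_n(z')$, and focus on the 3D position $\{n,y,z'\}$; by hypothesis its Grundy value is $n\oplus y\oplus z'$.

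Next I inspect $move_F(\{n,y,z'\})$ direction by direction. The $y$-direction contributes the ``ideal'' set $\{n\oplus v\oplus z':v<y\}$ unconditionally. Lemma \ref{lemmaforfxy}, applied to $h_{z'}$ (which satisfies NS by the induction hypothesis) with $y\leq h_{z'}(n)=g_n(z')$, collapses the $x$-direction contribution to the ideal set $\{u\oplus y\oplus z':u<n\}$. Consequently only the $z$-direction contribution $\{n\oplus\min(y,g_n(w))\oplus w:w<z'\}$ can deviate from the ideal mex computed in Lemma \ref{lemmafornimsum3d}. Using the digit structure of $g_n(z)$ and the choice of $y$, one then shows that the failure of NS forces either (a) $n\oplus y\oplus z'$ to appear among the $z$-direction Grundy values, violating mex, or (b) some value strictly less than $n\oplus y\oplus z'$ to be missing from the $z$-direction and not supplied by the $x$- or $y$-contribution. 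Either alternative contradicts $\mathcal{G}(\{n,y,z'\})=n\oplus y\oplus z'$.

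\textbf{Main obstacle.} The delicate step is the bit-level verification that (a) or (b) actually occurs; this typically demands a case split on whether $s=i-1$ or $s>i-1$ and on where the top differing bit of $z\oplus z'$ sits relative to bit $s$ of $g_n(z)$, and one often has to reselect the witness $z$ inside the block $[d\cdot2^i,(d+1)\cdot2^i)$ (preserving $z<z'$ and the floor condition) to engineer the equality $g_n(z)\oplus z=y\oplus z'$ that deposits the forbidden value $n\oplus y\oplus z'$ into the move set. A secondary technical nuisance is that the induction hypothesis needs $h_{z'}$ to be covered, which calls for a joint induction on $\max(n,z')$ or an appeal to a minimum-counterexample argument. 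The reason the sufficient direction is easier is that Lemma \ref{lemmaforfxy} there makes every direction behave ideally, whereas in the necessary direction one must \emph{rule out} the three directions conspiring to cancel the discrepancy introduced by the failure of NS.
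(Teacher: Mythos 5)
First, note that the paper does not actually prove this statement: it is quoted as Theorem 4 of \cite{integer2021}, so there is no in-paper argument to compare yours against, and your proposal must be judged on its own. It correctly identifies the natural line of attack --- argue by contraposition and, from a failure of the $NS$ property for $g_n$, manufacture a position $\{n,y,z'\}$ one of whose options carries the same purported Grundy value $n\oplus y\oplus z'$ --- and your base case $n=0$ (where $CB(F,0,y,z)$ is literally the two-dimensional game $CB(g_0,y,z)$, so Theorem \ref{theoregrundyforf} applies) is sound, as is the symmetry reduction from $h_m$ to $g_n$. But what you have written is a plan, not a proof, and the step you defer is the entire content of the theorem. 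Concretely: with the witness you extract ($y=c\cdot2^{s+1}+2^s$, so $y\oplus g_n(z)=2^s+t$), the identity $g_n(z)\oplus z=y\oplus z'$ that would deposit $n\oplus y\oplus z'$ into the move set generally fails, because $z\oplus z'<2^i$ while $2^s+t$ need not equal $z\oplus z'$. Making it work requires re-choosing $z$, $z'$ \emph{and} the level $i$: one can check that the equal-valued option $\{n,g_n(z),z\}$ can be produced only when $\lfloor g_n(z)/2^{i-1}\rfloor$ is even (when it is odd, the required $y$ falls below $g_n(z)$ and the move no longer truncates the second coordinate), and in the odd case one must pass to level $i+1$, where the block is no longer $g_n$-constant on its halves and one must argue separately that $y\le g_n(z')$ still holds. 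None of this case analysis appears in your write-up; it is precisely the ``delicate step'' you flag and leave open.

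Second, your fallback case (b) rests on an induction that does not close. To certify that the $x$-direction options of $\{n,y,z'\}$ contribute exactly the ideal set $\{u\oplus y\oplus z':u<n\}$, you invoke Lemma \ref{lemmaforfxy} for $h_{z'}$; but ``$h_{z'}$ has the $NS$ property'' is an instance of the very theorem being proved (for $m=z'$), and $z'$ is produced by the witness and can be arbitrarily large, so neither induction on $n$ nor on $\max(n,z')$ covers it --- as sketched, the argument is circular. The clean escape is to show that case (a) alone always suffices: an option whose nim-value equals that of the position contradicts the mex definition regardless of what the other two directions contribute, so no control of the $x$-direction is needed. That, however, puts all the weight back on the bit-level construction you have not supplied. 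Until that construction is written down, the proposal does not establish the theorem.
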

This is Theorem 4 in \cite{integer2021}.

\section{Multi-Dimensional Chocolate Bar}\label{multidimensionalchoco}
In Section \ref{threedimensionalchoco} we studied three-dimensional chocolate game, and in this section we study multi-dimensional chocolate game, where the dimension of chocolate bar is bigger or equal to three.
Let $s \in N$.

\begin{defn}\label{definitionoffunctionfmulti}
	Suppose that $F(x_1,x_2,\cdots, x_s)\in Z_{\geq0}$ for $x_i \in Z_{\geq0}$ for $i=1,2,\cdots, s$. $F$ is said to be \textit{monotonically increasing} if $F(x_1,x_2,\cdots, x_s) $ \\
	$ \leq F((x^{\prime}_1,x^{\prime}_2,\cdots, x^{\prime}_s)$ for $x_i, x^{\prime}_i \in Z_{\geq0}$, with $x_i\leq x^{\prime}_i$ for $i=1,2,\cdots, s$.
\end{defn} 
We define a multi-dimensional chocolate bar.
\begin{defn}\label{defofbarwithfuncmulti}
	Let $F$ be the monotonically increasing function in Definition \ref{definitionoffunctionfmulti}.\\ 
	Let $x_i \in Z_{\geq0}$ for $i=1,2,\cdots, s$.
	The $s+1$ dimensional chocolate bar comprises a set of $1 \times 1 \times 1  \cdots \times 1$ sized $s+1$ dimensional boxes. 
For $x_i \in Z_{\geq0}$, such that $u_i \leq x_i$, the $s+1$-th length of the column of position $(u_1,u_2,\cdots, u_n)$ is $ \min (F(u_1,u_2,\cdots, u_n),y) +1$, where $F$ is a monotonically increasing function. 
There is a bitter box in position $(0,0, \cdots, 0)$.
We denote this chocolate bar as $CB(F,x_1,x_2, \cdots, x_s,y)$.\\
\end{defn}

\begin{defn}
We define a  $s+1$-dimensional chocolate-bar game. Each player takes their turn to cut the bar on a hyper-plane that 
is vertical to the $x_i$-axis
, and eats the broken piece. The player who manages to leave the opponent with a single bitter cubic box is the winner. 
\end{defn}

Next, we define $move_F(\{x_1,x_2,\cdots, x_s, y\})$ in Definition \ref{moveformultiimension}.

Set $move_F(\{x_1,x_2,\cdots, x_s, y\})$  contains all the positions that can be reached from position $\{x_1,x_2,\cdots, x_s, y\}$ in one step (directly).

\begin{defn}\label{moveformultiimension}
	For $x_1,x_2,\cdots, x_s, y \in Z_{\ge 0}$, we define 
\begin{align}
 & move_F(\{x_1,x_2,\cdots, x_s, y\})= \nonumber \\
 & \cup^{i=n} _{i=1}\{\{x_1,x_2,\cdots, x_{i-1},\cdots,x_{i+1}, \cdots, x_s ,\nonumber \\
& \min(F(x_1,x_2,\cdots, x_{i-1},\cdots,x_{i+1}, \cdots, x_s),y) \}:u<x_i \}   \nonumber \\
 \cup & \{ \{x_1,x_2, x_{i},\cdots, x_s,w \}:w<y \}, \text{where $u,v,w \in Z_{\ge 0}$.} \nonumber
\end{align}	

\end{defn}


\begin{rem}
For a fixed function $F$, we use $move(\{x_1,x_2,\cdots, x_s, y\})$ instead of 
$move_F(\{x_1,x_2,\cdots, x_s, y\})$ for convenience. 
\end{rem}

\begin{lemma}\label{lemmafornimsummulti}
We have the following equation for any $k,h,i \in Z_{\geq 0}$. 
	\begin{align}
	&	x_1 \oplus x_2 \cdots  \oplus x_s \nonumber \\
	= & mex(\cup^{s}_{i=1}
		\{(x_1\oplus x_2 \oplus \cdots \oplus x_{i-1} \oplus x_i-k \oplus x_{i+1} \cdots \oplus x_s\}:
		i:k=1,2,...,x_i \}.\nonumber
	\end{align}
\end{lemma}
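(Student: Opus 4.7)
The plan is to verify the two defining properties of the mex operation for $T := x_1 \oplus x_2 \oplus \cdots \oplus x_s$: first that $T$ itself does not appear in the set on the right-hand side, and second that every nonnegative integer strictly less than $T$ does appear. This is the standard proof of the Sprague--Grundy value of multi-pile Nim; the three-pile version already appears as Lemma \ref{lemmafornimsum3d}, and the same argument extends verbatim to $s$ piles, so only the organization has to be done carefully.

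For non-membership of $T$, I would argue as follows. If we replace $x_i$ by $x_i - k$ for some $k \geq 1$, the resulting nim-sum equals
\begin{equation}
T \oplus x_i \oplus (x_i - k). \nonumber
\end{equation}
Since $x_i - k \neq x_i$, Lemma \ref{alemmafornimsum} gives $x_i \oplus (x_i - k) \neq 0$, so this value differs from $T$. Hence no element of the set on the right equals $T$.

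For the second property, fix any $m$ with $0 \leq m < T$, and let $j$ be the position of the highest bit in which $m$ and $T$ disagree. By the choice of $j$, bit $j$ of $T$ is $1$ and bit $j$ of $m$ is $0$. Since bit $j$ of $T = x_1 \oplus \cdots \oplus x_s$ is $1$, at least one index $i$ must have bit $j$ of $x_i$ equal to $1$. Define
\begin{equation}
y_i := m \oplus (T \oplus x_i). \nonumber
\end{equation}
Above bit $j$ the value $y_i$ agrees with $x_i$ (because $m$ and $T$ agree there), while at bit $j$ the value $y_i$ has a $0$ (because $m$ has bit $j$ equal to $0$ and $T \oplus x_i$ has bit $j$ equal to $0$). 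Hence $y_i < x_i$, and setting $k := x_i - y_i \geq 1$ produces an element in the $i$-th component of the union whose nim-sum with the other $x_j$'s is exactly $m$. Therefore every $m < T$ lies in the set, and combined with the first step this identifies $T$ as its mex.

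There is essentially no obstacle beyond bookkeeping: the whole argument is the classical Bouton-style proof for Nim, identical in spirit to the proof behind Lemma \ref{lemmafornimsum3d}, just written out for arbitrary $s$. The only care needed is to keep the index $i$ in the union separate from the bit-position index $j$ used in the descent argument, and to appeal to Lemma \ref{alemmafornimsum} when concluding $x_i \oplus (x_i - k) \neq 0$.
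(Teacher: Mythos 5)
Your proposal is correct and is exactly the classical Bouton-style mex argument that underlies this fact; both halves (non-membership of $T$ via Lemma \ref{alemmafornimsum}, and attainability of every $m<T$ by descending in the highest disagreeing bit) are carried out without error. The paper itself supplies no argument here --- it calls the lemma a ``trivial generalization'' of Lemma \ref{lemmafornimsum3d}, whose own proof is a citation to Siegel --- so your writeup is simply the standard proof that the paper delegates to the reference, not a different route.
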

\begin{proof}
This is a trivial generalization of Lemma \ref{lemmafornimsum3d}.
\end{proof}

\begin{theorem}\label{sufficientcondmulti}
	Let $F(x_1,x_2,\cdots, x_s)$ be a monotonically increasing function.
	Let $g_{x_1,x_2,\cdots, x_{i-1},\cdots,x_{i+1}, \cdots, x_s}(x_i) = F(x_1,x_2, \cdots, x_s)$  for $x_i \in Z_{\geq 0}$. \\
If $g_{x_1,x_2,\cdots, x_{i-1},\cdots,x_{i+1}, \cdots, x_s}$  satisfy the $NS$ property in Definition \ref{definitionoffunctionf} for any fixed ${x_1,x_2,\cdots, x_{i-1},\cdots,x_{i+1}, \cdots, x_s} \in Z_{\geq 0}$, then the Grundy number of chocolate bar $CB(F,x_1,x_2,\cdots, x_s, y)$ is 
	\begin{equation}
		\mathcal{G}(\{x_1,x_2,\cdots, x_s, y\}) = x_1\oplus x_2 \oplus \cdots \oplus x_s. \label{muconclusionnece}
	\end{equation}
\end{theorem}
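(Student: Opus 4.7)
The plan is to proceed by strong induction on $N = x_1 + x_2 + \cdots + x_s + y$, mirroring the structure of the three-dimensional argument in Theorem \ref{sufficientcond3d}. The base case $N = 0$ is immediate: the position is the single bitter box, whose Grundy value is $0$, matching the empty Nim-sum.

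For the inductive step, I would partition $move_F(\{x_1,\ldots,x_s,y\})$ into $s+1$ families according to which coordinate is reduced. Reducing $y$ to some $v<y$ yields, by the inductive hypothesis, Grundy value $x_1 \oplus \cdots \oplus x_s \oplus v$. Reducing the $i$-th coordinate $x_i$ to some $u<x_i$ yields Grundy value
\[
(x_1 \oplus \cdots \oplus x_{i-1} \oplus x_{i+1} \oplus \cdots \oplus x_s) \oplus u \oplus \min\bigl(y,\, g_{x_1,\ldots,\widehat{x_i},\ldots,x_s}(u)\bigr),
\]
where $\widehat{x_i}$ denotes omission of the $i$-th entry. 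The key reduction to the two-dimensional theory is to apply Lemma \ref{lemmaforfxy} to each slice function $h := g_{x_1,\ldots,\widehat{x_i},\ldots,x_s}$. By hypothesis every such $h$ has the $NS$ property, so provided $y \leq h(x_i) = F(x_1,\ldots,x_s)$, Lemma \ref{lemmaforfxy} gives the set equality
\[
\{\min(y, h(u)) \oplus u : u < x_i\} = \{y \oplus u : u < x_i\}.
\]
Consequently the set of Grundy values coming from reductions in the $i$-th direction is exactly $\{(x_1 \oplus \cdots \oplus \widehat{x_i} \oplus \cdots \oplus x_s) \oplus y \oplus u : u < x_i\}$, matching the $i$-th Nim-reduction set. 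Taking the union over all $i$ together with the $y$-reduction family recovers precisely the set whose mex is evaluated in Lemma \ref{lemmafornimsummulti}, giving $x_1 \oplus x_2 \oplus \cdots \oplus x_s \oplus y$ and closing the induction.

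The step I expect to demand the most care is the regime where $y > F(x_1,\ldots,x_s)$, so that the hypothesis $y \leq h(x_i)$ of Lemma \ref{lemmaforfxy} fails. Following the three-dimensional template, I would handle this either by noting that the excess of $y$ above $F(x_1,\ldots,x_s)$ does not alter the physical bar (so one can reduce to the legitimate case) or by verifying directly that the $\min$ operation still produces the required mex in this regime. Beyond this case distinction, the remaining work is a routine multi-index bookkeeping generalization of the 3D proof, and I do not anticipate any substantive further difficulty.
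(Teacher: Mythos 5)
Your proposal is correct and follows exactly the route the paper intends: the paper gives no written argument beyond the remark that the result is ``a simple generalization of Theorem \ref{sufficientcond3d},'' and your induction on the coordinate sum --- slicing into two-dimensional sections, applying Lemma \ref{lemmaforfxy} to each slice function $g_{x_1,\ldots,\widehat{x_i},\ldots,x_s}$, and finishing with the Nim mex identity of Lemma \ref{lemmafornimsummulti} --- is precisely that generalization, including the correct observation that the invariant $y \leq F(x_1,\ldots,x_s)$ is preserved by every move and is exactly what makes the hypothesis of Lemma \ref{lemmaforfxy} available at each inductive step. The only discrepancy is that you (correctly) obtain $\mathcal{G}(\{x_1,\ldots,x_s,y\}) = x_1 \oplus \cdots \oplus x_s \oplus y$, whereas the displayed equation \eqref{muconclusionnece} omits the final $\oplus\, y$; comparing with Theorem \ref{sufficientcond3d} shows this is a typographical slip in the statement, and your version is the one that is actually true.
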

This is a simple generalization of Theorem \ref{sufficientcond3d}.

\begin{theorem}\label{necessarycondmulti}
Let $F(x_1,x_2,\cdots, x_s)$ be a monotonically increasing function, and let Let $g_{x_1,x_2,\cdots, x_{i-1},\cdots,x_{i+1}, \cdots, x_s}(x_i) = F(x_1,x_2, \cdots, x_s)$  for $x_i \in Z_{\geq 0}$.
Suppose that the Grundy number of chocolate bar $CB(F,x_1,x_2,\cdots, x_s, y)$ is
	\begin{equation}
		\mathcal{G}(\{x_1,x_2,\cdots, x_s, y\}) = x_1\oplus x_2 \oplus \cdots \oplus x_s.  \label{munecessarynimsum}
	\end{equation}
Then, $g_{x_1,x_2,\cdots, x_{i-1},\cdots,x_{i+1}, \cdots, x_s}(x_i)$  satisfies the $NS$ property in Definition \ref{definitionoffunctionf} for any fixed $x_1,x_2,\cdots, x_{i-1},\cdots,x_{i+1}, \cdots, x_s \in Z_{\geq 0}$.
\end{theorem}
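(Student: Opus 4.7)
The plan is to prove the contrapositive, mirroring the strategy behind Theorem \ref{necessarycond3d} in \cite{integer2021}. Assume that for some $i$ and some fixed values of the other coordinates the function $g_{\ldots}$ fails the $NS$ property. By relabeling of coordinates it suffices to treat $i = s$; fix values $a_1, \ldots, a_{s-1}$ such that $g(w) := F(a_1, \ldots, a_{s-1}, w)$ violates $NS$, so that there exist $z < z'$ and $j \geq 1$ with $\lfloor z/2^j \rfloor = \lfloor z'/2^j \rfloor$ but $\lfloor g(z)/2^{j-1} \rfloor < \lfloor g(z')/2^{j-1} \rfloor$ (using monotonicity of $g$ for the strict inequality). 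Applying the contrapositive of Theorem \ref{theoregrundyforf} to the two-dimensional bar $CB(g, y, z)$ yields a 2D position $(y^*, z^*)$ with $\mathcal{G}_g(\{y^*, z^*\}) \neq y^* \oplus z^*$; choose such a position lexicographically minimal in $(z^*, y^*)$, so that $\mathcal{G}_g = y \oplus z$ holds on every strictly smaller 2D position.

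Next I would consider the multi-dimensional position $P = \{a_1, \ldots, a_{s-1}, z^*, y^*\}$ and compute $\mathcal{G}(P)$ as the mex over $move_F(P)$. The key structural observation is that $move_F(P)$ partitions into \emph{slice} moves — those fixing $a_1, \ldots, a_{s-1}$ and acting only on $x_s$ or $y$, which are in bijection with the moves of $CB(g, y, z)$ from $(y^*, z^*)$ — and \emph{external} moves that reduce some $a_k$ with $k < s$ (and may cap $y^*$ via $F$). Under the hypothesis of the theorem, the Grundy number of every reached position equals its nim-sum, so the slice nim-sums form the $XOR$-shifted image (by $a_1 \oplus \cdots \oplus a_{s-1}$) of the 2D move set used to compute $\mathcal{G}_g(\{y^*, z^*\})$. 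For the external moves, the cap $\min(F(\ldots), y^*)$ is governed by restrictions of $F$ that fix one coordinate at $z^*$; these restrictions are functions of one variable of the same type as $g$, but indexed by different coordinates. Assuming (by an induction on the number of coordinates free in $F$, with base case $s = 2$ handled by Theorem \ref{necessarycond3d}) that these other restrictions satisfy $NS$, Lemma \ref{lemmaforfxy} shows that the external contribution to the move set matches exactly the image predicted by Lemma \ref{lemmafornimsummulti}.

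Combining these two pieces, the set of Grundy values coming from $move_F(P)$ differs from the ``ideal'' set prescribed by Lemma \ref{lemmafornimsummulti} only in the slice piece — and in precisely the way dictated by the failure of the 2D nim-sum formula at $(y^*, z^*)$. The resulting mex $\mathcal{G}(P)$ therefore cannot equal $a_1 \oplus \cdots \oplus a_{s-1} \oplus y^* \oplus z^*$, contradicting the hypothesis and establishing the theorem. The principal obstacle will be the external-moves analysis: unlike the three-dimensional case (where there is only one external coordinate), in dimension $s + 1$ there are $s - 1$ external directions, each contributing its own capped Grundy set. The induction on $s$ together with Lemma \ref{lemmaforfxy} ensures that these contributions are well-controlled, but verifying carefully that they do not accidentally ``heal'' the 2D violation at $(y^*, z^*)$ in the final mex computation — that is, that the slice failure genuinely propagates through the multi-dimensional mex — is the delicate technical step on which the whole argument hinges.
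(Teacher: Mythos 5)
There is a genuine gap, and you have in fact named it yourself: the claim that the external moves ``do not accidentally heal the 2D violation'' is left unproven, and it is precisely the step on which your whole argument rests. Note first that the paper offers no written proof of this theorem at all --- it only remarks that the statement is ``a simple generalization of Theorem \ref{necessarycond3d}'' (itself cited to the earlier paper) --- so the relevant comparison is with the mechanism of the three-dimensional necessity proof. That mechanism does not require any control of the external moves, because it does not compare the full mex against an ``ideal'' set. Under the hypothesis \eqref{munecessarynimsum}, \emph{every} option $Q$ of a position $P$ already satisfies $\mathcal{G}(Q)=\operatorname{nim-sum}(Q)$, so $\mathcal{G}(P)=\textit{mex}\{\operatorname{nim-sum}(Q):Q\in move_F(P)\}$, and to derive a contradiction it suffices to exhibit a \emph{single} option $Q$ with $\operatorname{nim-sum}(Q)=\operatorname{nim-sum}(P)$. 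The failure of the $NS$ property for $g_{a_1,\dots,a_{s-1}}$ produces, via the floor lemma, a pair $z<z'$ and a height $v$ such that the move reducing $x_i$ from $z'$ to $z$ (with the cap $\min(g(z),v)$) preserves the nim-sum of the $(x_i,y)$-pair; the remaining coordinates $a_1,\dots,a_{s-1}$ are untouched by this move and contribute the same constant $a_1\oplus\cdots\oplus a_{s-1}$ to both sides. This is why the generalization is genuinely routine: the witnessing move lives entirely in one two-dimensional slice, and the other $s-1$ directions never enter the argument.

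Your route, by contrast, tries to show that the full mex at $P$ differs from the predicted value by matching the entire option set against the set in Lemma \ref{lemmafornimsummulti}, which forces you to account for all $s-1$ families of external moves --- and your device for doing so does not work as stated. You propose to assume, ``by induction on the number of coordinates free in $F$,'' that the restrictions of $F$ in the other directions satisfy the $NS$ property; but those restrictions are one-variable sections of the \emph{same} $s$-variable function $F$, not of a function of fewer variables, so no quantity is being decreased and the induction is not well founded. (A genuine lower-dimensional sub-bar closed under moves arises only when some coordinate is fixed at $0$, which does not cover arbitrary fixed values $a_k$.) Assuming what you are trying to prove for all directions other than $i$ is essentially circular. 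Both problems disappear if you replace the ``compare the whole move set'' strategy with the ``exhibit one nim-sum-preserving option'' strategy sketched above; as written, however, the proposal does not constitute a proof.
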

This is a simple generalization of Theorem \ref{necessarycond3d}.

\section{Application to the game of Nim with a pass}
We modify the standard rules of the game to allow a one-time pass, that is, a pass move that may be used at most once in the game and not from a terminal position. Once a pass has been used by either player, it is no longer available. It is well-known that in classical Nim, the introduction of the pass alters the underlying structure of the game, significantly increasing its complexity.

A multi-dimensional chocolate game can show a perspective on the complexity of the game of Nim with a pass.
Therefore, the authors address a longstanding open question in combinatorial game theory.

One of the authors has studied this problem in \cite{integers1}. For other research on the game with a pass see
\cite{nimpass}.

\subsection{Two-pile Nim with a pass}

\begin{defn}\label{twopilenim}
There are two piles of stones.
Each player takes their turns, and remove as many stones as she or he likes from one pile.
The player who remove the last stones or a stone is the winner.
Let $t \in Z_{\geq0}$.
We denote by $x$ and $y$ the numbers of stones of the piles.
We assume that a pass move is allowed in this game, but not when $x \leq t$ and $y \leq t$.
\end{defn}

\begin{exam}
The cases of two-pile Nim with a pass are presented in figures \ref{choconimpass}, \ref{choconimpass2}, \ref{choconimpass3}, and \ref{choconimpass4}, where the pass move is denoted by the height. As you see, games of Nim with a pass move are the same as three-dimensional chocolates.
\end{exam}

\begin{figure}[H]
\begin{center}
\includegraphics[height=2.65cm]{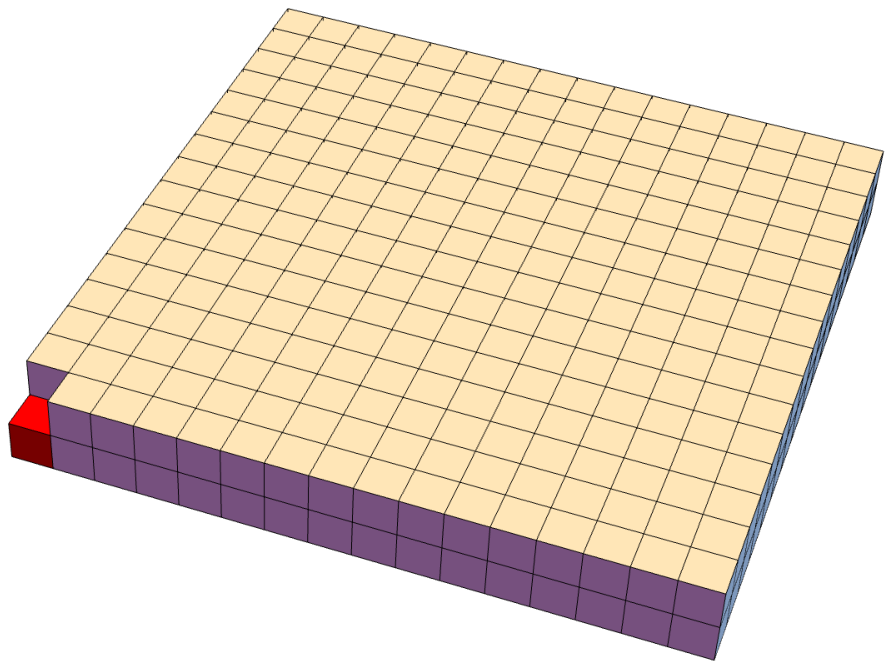}
\caption{$t=1$}
\label{choconimpass} 
\end{center}
\end{figure}
\vspace{0.3cm}

\begin{figure}[H]
\begin{center}
\includegraphics[height=2.65cm]{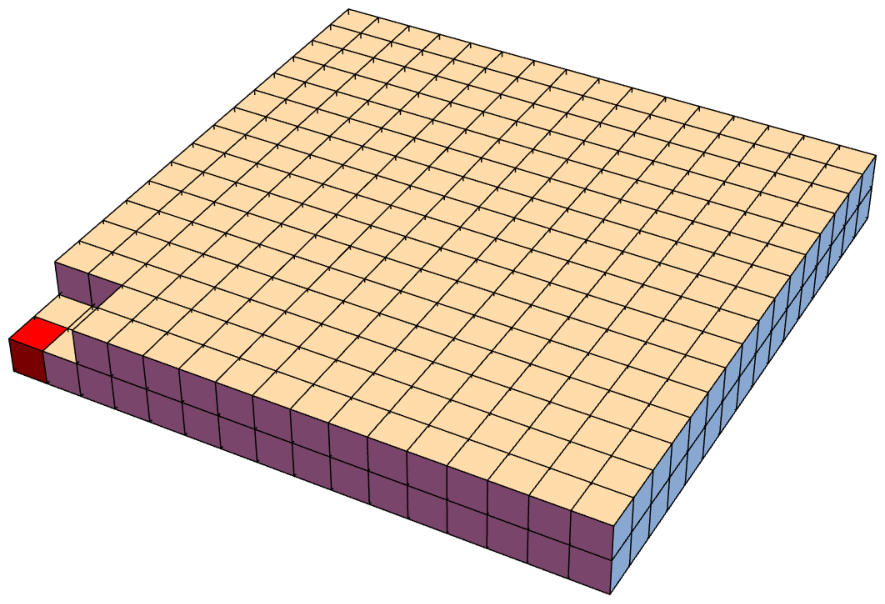}
\caption{$t=2$}
\label{choconimpass2} 
\end{center}
\end{figure}
\vspace{0.3cm}

\begin{figure}[H]
\begin{center}
\includegraphics[height=2.65cm]{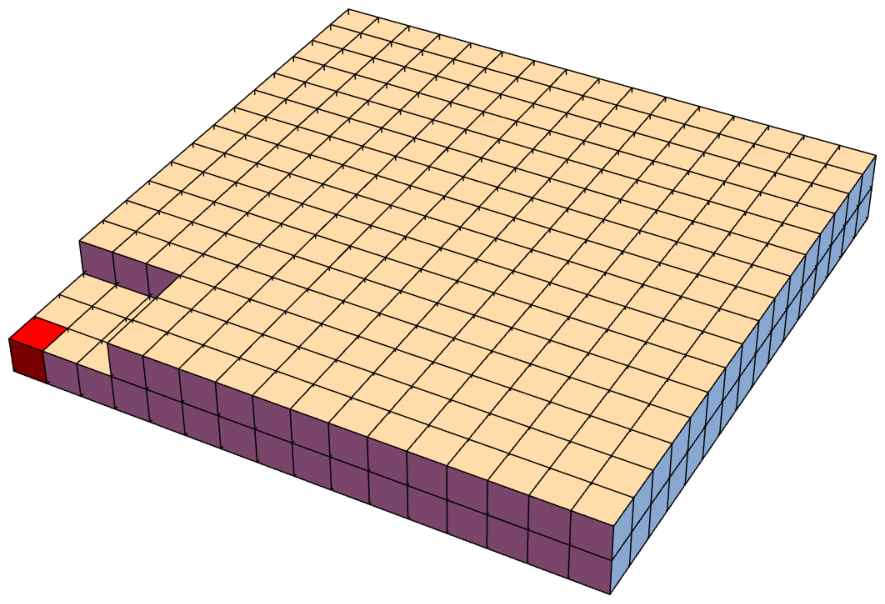}
\caption{$t=3$}
\label{choconimpass3} 
\end{center}
\end{figure}
\vspace{0.3cm}

\begin{figure}[H]
\begin{center}
\includegraphics[height=2.65cm]{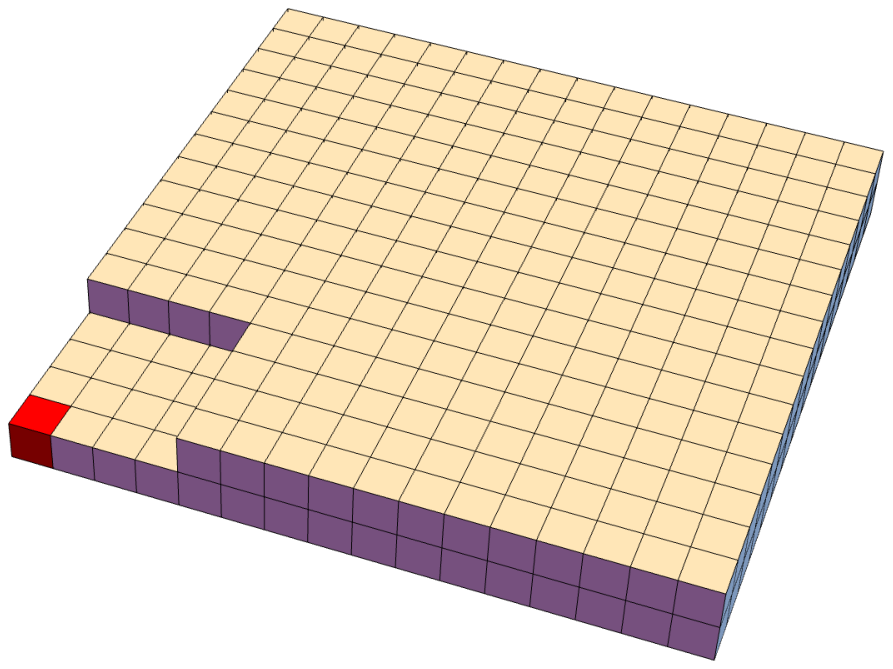}
\caption{$t=4$}
\label{choconimpass4} 
\end{center}
\end{figure}
\vspace{0.3cm}

\begin{theorem}\label{twopiletheorem}
For the game of Nim with a pass of Definition \ref{twopilenim}, $\{x,y,p\}$ is a $\mathcal{P}$-position if and only if
$x \oplus y \oplus p = 0$ only when $t$ is odd.
\end{theorem}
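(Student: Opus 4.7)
The strategy is to recast two-pile Nim with a pass as a three-dimensional chocolate bar and then invoke the characterizations of Section \ref{threedimensionalchoco}. Define
\begin{equation*}
F(u, w) = \begin{cases} 0 & \text{if } u \leq t \text{ and } w \leq t, \\ 1 & \text{otherwise,} \end{cases}
\end{equation*}
which is monotonically increasing, and identify the Nim-with-a-pass state $\{x, y, p\}$ (where $p \in \{0, 1\}$ encodes pass availability) with the chocolate bar $CB(F, x, p, y)$, using the first pile as coordinate $x$, the second pile as coordinate $y$, and the pass indicator as the height coordinate $p$.

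Under this identification the moves match up. A cut that reduces a pile sends $\{x, p, y\}$ to $\{u, \min(F(u, y), p), y\}$ (or the analogous reduction in $y$), and the clamp $\min(F, p)$ automatically extinguishes the pass token whenever the resulting piles both lie in $[0, t]$. A cut lowering the height coordinate from $1$ to $0$ is a genuine cut only when some column of the bar has height $2$, i.e.\ when $F(u, w) = 1$ for some $(u, w)$ with $u \leq x$ and $w \leq y$; this happens precisely when $x > t$ or $y > t$, which is the pass-allowed condition of Definition \ref{twopilenim}. Hence the two games coincide, and Theorems \ref{sufficientcond3d} and \ref{necessarycond3d} apply: the Grundy value of $CB(F, x, p, y)$ equals $x \oplus p \oplus y$ if and only if every cross-section $g_n(w) = F(n, w)$ and $h_m(u) = F(u, m)$ satisfies the $NS$ property of Definition \ref{definitionoffunctionf}.

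We then carry out the $NS$ analysis explicitly. If $n > t$ then $g_n \equiv 1$ and $NS$ is trivial; if $n \leq t$ then $g_n$ is the step function taking value $0$ on $[0, t]$ and value $1$ on $[t+1, \infty)$. Since $g_n$ takes only values in $\{0, 1\}$, the $NS$ condition at index $i \geq 2$ is automatic, and at $i = 1$ it reduces to the requirement $g_n(2k) = g_n(2k+1)$ for every $k$. This fails precisely when the jump of $g_n$ lies strictly inside a pair $\{2k, 2k+1\}$, i.e.\ when the pair $\{t, t+1\}$ straddles the jump; equivalently, when $t$ is even. For $t$ odd the jump sits between the pairs $\{t-1, t\}$ and $\{t+1, t+2\}$, and $NS$ holds. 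The argument for $h_m$ is symmetric, so $NS$ holds for every cross-section if and only if $t$ is odd, and the claimed $\mathcal{P}$-position characterization follows.

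The main obstacle we expect is the careful bookkeeping in the second paragraph: one must check that the formal chocolate-bar moves match the Nim-with-a-pass moves on the nose, in particular that reducing a pile automatically suppresses the pass token when the new piles are both small, and that the height cut encodes the pass move with exactly its legality restriction. Once this game correspondence is pinned down, the remaining $NS$ verification is essentially a parity check on $t$.
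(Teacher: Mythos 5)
Your proof follows exactly the route the paper intends: the paper's entire justification for this theorem is the one-line remark that it is ``direct from'' Theorems \ref{sufficientcondmulti} and \ref{necessarycondmulti}, and your explicit step function $F$, the verification that the chocolate-bar moves (including the $\min(F,p)$ clamp and the height cut) reproduce the Nim-with-a-pass moves, and the parity analysis of the $NS$ property are precisely the details the paper elides. The only caveat --- one you share with the paper --- is that the ``only when $t$ is odd'' direction needs an extra word: Theorem \ref{necessarycond3d} only yields that the Grundy function deviates from the Nim-sum somewhere when $t$ is even, which is not literally the same as the $\mathcal{P}$-position set deviating from the zero-Nim-sum set (e.g.\ for $t=0$ one checks directly that $(1,2)$ with the pass available is a $\mathcal{P}$-position of nonzero Nim-sum).
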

This is direct from theorems \ref{sufficientcondmulti} and \ref{necessarycondmulti}.

\subsection{Three-pile Nim with a pass}

\begin{defn}\label{threepilenim}
There are three piles of stones.
Each player takes their turns, and remove as many stones as she or he likes from one pile.
The player who remove the last stones or a stone is the winner.
Let $t \in Z_{\geq0}$.
We denote by $x$, $y$ and $z$  the numbers of stones of the piles.
We assume that a pass move is allowed in this game, but not when $x \leq t$, $y \leq t$ and $z \leq t$.
\end{defn}

\begin{theorem}\label{threepiletheorem}
For the game of Nim with a pass of Definition \ref{threepilenim}, $\{x,y,z,p\}$ is a $\mathcal{P}$-position if and only if
$x \oplus y \oplus z \oplus p = 0$ only when $t$ is odd.
\end{theorem}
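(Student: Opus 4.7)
The plan is to realize 3-pile Nim with a pass as a $4$-dimensional chocolate bar and then invoke the multi-dimensional theorems. Explicitly, I set up $CB(F,x,y,z,p)$ with the three pile-counts $x,y,z$ as the horizontal coordinates and the pass flag $p\in\{0,1\}$ as the height coordinate, and take
\[
F(x,y,z) = \begin{cases} 0 & \text{if } x\leq t,\ y\leq t,\ z\leq t,\\ 1 & \text{otherwise.}\end{cases}
\]
Under this identification, removing stones from one pile is a horizontal cut, and using the one-time pass is the height cut taking $p=1$ to $p=0$. The rule that the pass is forbidden when all piles are $\leq t$ is automatically enforced: at such a column the height is $\min(F,p)+1=1$, so no height cut is available. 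Thus the two games coincide, and Theorems \ref{sufficientcondmulti} and \ref{necessarycondmulti} reduce the statement to showing that every one-variable section of $F$ satisfies the $NS$ property of Definition \ref{definitionoffunctionf} if and only if $t$ is odd.

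First I dispatch the routine bookkeeping: $F$ is manifestly monotonically increasing in the sense of Definition \ref{definitionoffunctionfmulti}, and the move correspondence between pass-Nim and the chocolate game is immediate from Definition \ref{moveformultiimension}. Next I analyze the $NS$ property. Because every section $g$ of $F$ takes values in $\{0,1\}$, the quantity $\lfloor g(c)/2^{i-1}\rfloor$ vanishes for $i\geq 2$, leaving only the $i=1$ case
\[
\lfloor c/2\rfloor=\lfloor c'/2\rfloor \ \Longrightarrow\ g(c)=g(c').
\]
If one of the two fixed piles exceeds $t$ then $g\equiv 1$ and $NS$ holds trivially. Otherwise both fixed piles are $\leq t$, and $g$ is the unit step with $g(c)=0$ for $c\leq t$ and $g(c)=1$ for $c\geq t+1$. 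The only pair that can threaten $NS$ is $(c,c')=(t,t+1)$, and these share a common bucket $\lfloor\cdot/2\rfloor$ precisely when $t$ is even.

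Combining the two halves yields the statement ``every section satisfies $NS$ if and only if $t$ is odd'', and Theorems \ref{sufficientcondmulti} and \ref{necessarycondmulti} then deliver the claim. The conceptual core is the $4$-dimensional chocolate-bar encoding of the pass; once that dictionary is fixed, the $NS$ analysis is a one-line parity check. The one point demanding care is verifying that the chocolate-bar move structure reproduces pass-Nim exactly, specifically that a horizontal cut carrying the position into the ``all piles $\leq t$'' region collapses $p$ down to $0$; this is precisely the effect of the $\min(F,p)$ factor in the column-height formula, and it reflects the strategic equivalence between $(x,y,z,1)$ and $(x,y,z,0)$ once the pass has become unusable.
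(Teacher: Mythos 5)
Your proposal follows exactly the route the paper intends: encode three-pile Nim with a pass as the four-dimensional chocolate bar $CB(F,x,y,z,p)$ with $F$ the indicator of ``some pile exceeds $t$,'' and then apply Theorems \ref{sufficientcondmulti} and \ref{necessarycondmulti}, so the approaches are essentially the same --- indeed the paper gives only the one-line citation, and your $NS$ parity check (only $i=1$ matters for a $\{0,1\}$-valued section, and the step at $(t,t+1)$ lands in one $\lfloor\cdot/2\rfloor$-bucket exactly when $t$ is even) is the verification the paper leaves implicit. The only caveat, shared equally by the paper, is that the ``only when $t$ is odd'' direction strictly needs one further remark: Theorem \ref{necessarycondmulti} shows the Grundy function is not the nim-sum when $t$ is even, which does not by itself preclude the zero sets coinciding, so a witness position (e.g.\ one reaching the all-piles-$\leq t$ region with nim-sum $0$ but an available winning move) should be exhibited.
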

This is direct from theorems \ref{sufficientcondmulti} and \ref{necessarycondmulti}.

\end{document}